\documentclass[11pt, a4paper]{article}

\usepackage{fullpage}
\usepackage{setspace}
\usepackage[section]{placeins}

\usepackage{xcolor}           
\usepackage[normalem]{ulem}    
\usepackage[colorlinks=true, allcolors= blue]{hyperref}
\hypersetup{
    bookmarksdepth=3 %
}

\usepackage{todonotes}
\usepackage[draft, deletedmarkup=sout]{changes} 
\definechangesauthor[name=Rai, color=green]{R}
\definechangesauthor[name=David, color=blue]{D}
\definechangesauthor[name=Krish, color=purple]{K}

\usepackage{amsmath,amssymb,amsthm,mathtools,mathrsfs}
\usepackage{bbold}             
\usepackage{stackengine}       
\usepackage{nicefrac}   
\usepackage{cleveref}

\usepackage{graphicx}
\usepackage{subcaption}
\usepackage{booktabs}
\usepackage{algorithm} 
\usepackage{algpseudocodex} 
\usepackage{algorithmicx} 

\usepackage{tikz}
\usetikzlibrary{
  arrows.meta,
  positioning,
  shapes.geometric,
  decorations.pathreplacing,
  fit,
  shadows.blur,
  automata,
  bayesnet
}
\usetikzlibrary{arrows.meta,automata,positioning,calc}
\usepackage{amsmath,amssymb,amsthm,mathtools,mathrsfs}
\usepackage{bbold}
\usepackage{stackengine}
\usepackage{nicefrac}
\usepackage{cleveref}
\usetikzlibrary{calc,positioning,arrows.meta}
\usetikzlibrary{fit,backgrounds}
\tikzset{
  state/.style={circle,draw,thick,minimum size=15mm,inner sep=0pt,font=\large},
  every edge/.style={draw,thick,-{Latex[length=2.5mm]}}
}
\tikzset{
  p1state/.style={
    state,
    isosceles triangle,
    isosceles triangle apex angle=60,
    shape border rotate=90, % triangle pointing UP
    minimum size=12mm
  },
  p2state/.style={
    state,
    isosceles triangle,
    isosceles triangle apex angle=60,
    shape border rotate=-90, % triangle pointing DOWN
    minimum size=12mm
  }
}

\newtheorem{Theorem}{Theorem}[section]

\newtheorem{Lemma}[Theorem]{Lemma}

\newtheorem{Definition}[Theorem]{Definition}
\newtheorem{remark}[Theorem]{Remark}
\newtheorem{example}[Theorem]{Example}

\newcommand{\A}{\mathcal{A}}
\renewcommand{\H}{\mathcal{H}}
\newcommand{\B}{\mathcal{B}}
\newcommand{\C}{\mathcal{C}}
\newcommand{\D}{\mathcal{D}}
\newcommand{\I}{\mathcal{I}}
\newcommand{\J}{\mathcal{J}}

\newcommand{\G}{\mathcal{G}}

\newcommand{\K}{\mathcal{K}}

\renewcommand{\P}{\mathcal{P}}

\renewcommand{\S}{\mathcal{S}}
\newcommand{\T}{\mathcal{T}}

\newcommand{\X}{\mathcal{X}}

\newcommand{\NN}{\mathbb{N}}
\newcommand{\PP}{\mathbb{P}}
\newcommand{\EE}{\mathbb{E}}

\newcommand{\RR}{\mathbb{R}}

\newcommand{\proj}{\textnormal{proj}} 
\newcommand{\supp}{\textnormal{supp}}   

\newcommand{\until}{\,..\,}
\newcommand{\eps}{\varepsilon}
\newcommand{\given}{\,|\,}

\DeclareMathOperator*{\argmin}{arg\,min}
\newcommand{\defas}{\coloneqq}

\newcommand{\zeroplus}{\texttt{0}^{+}}
\newcommand{\zeroplusplus}{\texttt{0}^{++}}
\newcommand{\abszero}{\texttt{0}^*}
\newcommand{\oneplus}{\texttt{1}^{+}}
\newcommand{\oneplusplus}{\texttt{1}^{++}}
\newcommand{\onetop}{\texttt{1}^{\top}}
\newcommand{\absone}{\texttt{1}^*}

\newcommand{\Zero}{\textnormal{zero}}
\newcommand{\One}{\textnormal{one}}
\newcommand{\Zeroabs}{\textnormal{zeroabs}}
\newcommand{\Oneabs}{\textnormal{oneabs}}

\allowdisplaybreaks

\title{Approximating the Uniform Value in \\Hidden Stochastic Games with Doeblin Condition}

\author{
    Krishnendu Chatterjee\thanks{Institute of Science and Technology Austria, Austria.}
\and 
    David Lurie\thanks{Paris Dauphine University, PSL Research University, Paris, France, CEREMADE, and NyxAir, Paris, France.}
\and 
    Raimundo Saona\thanks{London School of Economics and Political Science, London, United Kingdom.}
\and 
    Bruno Ziliotto\thanks{Toulouse School of Economics, Université Toulouse Capitole, Institut de Mathématiques de Toulouse, CNRS, France.}
}

\date{\today} 

\begin{document}

\maketitle 
    
\begin{abstract}
    We study \emph{zero-sum two-player hidden stochastic games}, where players receive partial observations of the state.
    We focus on a central solution concept for analyzing long-duration stochastic games: the \emph{uniform value}, a limiting average payoff that both players can guarantee for sufficiently long durations.
    In the general case, prior work provides examples of games that do not have a uniform value.
    Moreover, for the subclass of games that do have a uniform value, there exists no algorithm that approximates it.
    Therefore, we generalize the \emph{Doeblin condition} for Markov chains (which guarantees the existence of a unique invariant measure) to hidden stochastic games.
    Informally, the Doeblin condition for hidden stochastic games requires that, for every way to play the game, there exists a fixed belief such that, no matter the initial belief over the state of the game, after sufficiently many stages, the posterior belief is probably close to this fixed belief.
    Under the Doeblin condition, we prove the existence of the uniform value, provide an algorithm to approximate it, and prove that no algorithm can compute it exactly.
    Then, we identify structural conditions on the transition function that ensure the Doeblin condition holds both in the blind setting, where observations are uninformative, and in the hidden setting, where observations are partially informative.
    When considering games with only one player, namely partially observable Markov decision processes, our results provide a novel subclass in which the uniform value exists and can be approximated, but cannot be computed exactly.
\end{abstract}

\noindent \textbf{Keywords}: Stochastic game, signal, finite-state, computational complexity, uniform value, Doeblin.

\newpage

%%%%%%%
%%%%%%%
%%%%%%%
%%%%%%%
%%%%%%%
%%%%%%%

%\tableofcontents
 
\section{Introduction}\label{Section: Introduction}

%\paragraph{Context}
Zero-sum two-player stochastic games~\cite{shapley1953stochastic} model the strategic interaction of two players in a finite-state environment.
At each stage, both players simultaneously choose public actions, which, together with the current state, determine an immediate reward and the stochastic transition to the next state.
The classic setting is fully observable: players observe the current state at every stage.
These games generalize several well-known models, including one-player stochastic games, or \emph{Markov decision processes} (MDPs)~\cite{puterman1994}, and zero-player stochastic games, or \emph{Markov chains}~\cite{norris1998markov}.
Stochastic games arise in many applications, including economics~\cite{amir2003stochastic}, multi-agent learning~\cite{littman1994markov}, and cyber-security~\cite{aslanyan2016quantitative}.

%\paragraph{Uniform Value}
In the \emph{$n$-stage game}, with $n$ a positive integer, Player $1$ aims to maximize the expected Cesàro mean $\tfrac{1}{n}\sum_{m=1}^{n} G_m$, where $G_m$ denotes the random (because the state transition is stochastic) reward at stage $m$, while Player $2$ aims to minimize it.
Every $n$-stage game admits an $n$-stage value~\cite{mertens2015repeated}, denoted by $v_n$, a quantity that each player can guarantee unilaterally.
Two classical approaches are commonly used to analyze stochastic games with long durations, as $n$ grows.
In the \emph{asymptotic approach}, one studies the sequence of $(v_n)$ as $n$ grows.
Bewley and Kohlberg~\cite{bewley1976asymptotic} proved that $(v_n)$ converges to a limit, denoted by $v_\infty$.
In the \emph{uniform approach}, one seeks strategies that are approximately optimal for all sufficiently large horizons.
Mertens and Neyman~\cite{mertens1981stochastic} established that every stochastic game admits a \emph{uniform value} $v$, that is, for all sufficiently long horizons, Player $1$ (resp., Player $2$) has a strategy that can guarantee at least (resp., at most) $v$ minus (resp., plus) any prescribed margin.
Moreover, the uniform value $v$ coincides with the limit value $v_\infty$.
Practical algorithms to compute or approximate the uniform value of stochastic games are available; see~\cite{oliu2021new} for recent algorithms.

%\paragraph{Partial Observability}
The assumption of full observability is often unrealistic: players usually observe the state only indirectly through signals~\cite{emery2004approximate,young2013pomdp}.
This limitation motivates the study of \emph{hidden stochastic games}~\cite{renault2020hidden}, also known as stochastic games with signals~\cite{solan2016stochastic} or as partially observable stochastic games~\cite{hansen2004dynamic}.
In this setting, players share a common initial belief, i.e., a probability distribution over the state space, and observe each other's actions and signals at every stage.
Hidden stochastic games generalize both stochastic games~\cite{shapley1953stochastic} and partially observable Markov decision processes (POMDPs)~\cite{krishnamurthy2016partially}.

%\paragraph{Impossibilities}
Despite strong existence results for (fully observable) stochastic games, the hidden setting is substantially more challenging.
Strong negative results apply to the class where signals are uninformative (also called blind), in which players gain no information after every stage beyond the actions that were played.
Ziliotto~\cite{ziliotto2016zero} constructed a hidden stochastic game with uninformative signals in which the uniform value fails to exist.
Madani et al.~\cite{madani2003undecidability} showed that approximating the uniform value of one-player hidden stochastic games with uninformative signals (blind MDPs, where the uniform value is known to exist in general~\cite{rosenberg2002blackwell}) is undecidable.
Our objective is to identify a subclass of hidden stochastic games where the uniform value exists and can be approximated algorithmically.

% \paragraph{Inspiration}
We take inspiration from the Doeblin condition for finite Markov chains (see \cite[Chapter 2]{stroockIntroductionMarkovProcesses2014}). In its simplest form, a finite Markov chain satisfies the Doeblin condition if there exists a state that can be reached from every other state with positive probability in a single transition. This condition implies, for example, a geometric convergence to a unique invariant measure.
In a similar spirit, a hidden stochastic game satisfies our Doeblin condition if, by playing the game for long enough, the posterior belief is sure to reach close to a belief that is independent of the initial belief the players started with.

% \paragraph{Previous classes}
Subclasses of hidden stochastic games have been studied before, including \emph{ergodicity} and \emph{primitivity}.
Blind stochastic games are ergodic~\cite{chatterjee2025ergodic} if the influence of actions taken in the distant past vanishes over time.
Hidden stochastic games are \emph{primitive}~\cite{Chatterjee2026mon} if every state is reached from every other state with positive probability, independently of how players act.
Our Doeblin condition extends both ergodicity and primitivity.

\paragraph{Contributions}
Our main contributions are the following:
\begin{itemize}
    \item 
        We introduce the \emph{Doeblin condition} for hidden stochastic games (\Cref{Definition: Doeblin Hidden Stochastic Game}) and prove that this subclass guarantees the existence of the uniform value and its approximability (\Cref{Theorem: Doeblin Hidden Stochastic Games});
    \item 
        We prove that \emph{ergodicity} (\Cref{Definition: Ergodic blind stochastic games}) and \emph{primitivity} (\Cref{Definition: Primitive hidden stochastic game}) imply the Doeblin condition in the blind setting and hidden setting, respectively (\Cref{Theorem: Ergodic Blind is Doeblin} and \Cref{Theorem: Primitive Hidden is Doeblin});
    \item 
        We provide a natural extension of ergodicity to the hidden setting (\Cref{Definition: Ergodic Hidden Stochastic Game}) and show that it fails to guarantee the existence of the uniform value (\Cref{Theorem: weakly ergodic hidden is not doeblin}).
\end{itemize}

\paragraph{Technique}
Every hidden stochastic game is equivalent to a fully observable stochastic game on the infinite belief space~\cite{mertens2015repeated}.
The Doeblin condition can be interpreted as a ``reset'' property for the belief process: after sufficiently many stages, the posterior belief has a strictly positive probability of being close to a fixed belief.
From the so-called \emph{belief stochastic game}, we construct a finite-state stochastic game, which we call \emph{abstract stochastic game}, by discretizing the set of beliefs.
Then, we compare the $n$-stage objectives of the original and abstract games via a coupling argument.

%\paragraph{Coupling}
We construct a coupling between the original hidden stochastic game and the abstract stochastic game.
The coupling is organized into blocks and each block is further divided into smaller sub-blocks on which the Doeblin condition holds.
This organization ensures that, with high probability, the beliefs in the two games become approximately close in the first part of each block.
Once beliefs are close, the coupling forces the same strategy pair to be played in both games, which in turn guarantees that the expected reward on the second part of the block is close.
The lengths of the blocks and sub-blocks are chosen so that the time spent in the first part of each block, when the beliefs may still be far apart, is negligible compared to the time spent in the second part, during which the expected rewards remain close.
Because the difference between expected rewards is small in each block, the coupling ensures that the $n$-stage objectives are close.

%\paragraph{Existence of the Uniform Value}
We deduce the existence of the uniform value in Doeblin hidden stochastic games by its existence in finite-state stochastic games~\cite{mertens1981stochastic}.
Indeed, using the coupling argument, we establish that the $n$-stage payoffs of the game and its abstract version are close.
With this, we show that there is a unique accumulation point of the uniform values of abstract games corresponding to different approximation parameters, and this limit corresponds to the uniform value of the Doeblin hidden stochastic game.

%\paragraph{Approximation of the Uniform Value}
We quantify the distance between the $n$-stage payoffs of the game and its abstract version, uniformly in $n$.
In particular, the uniform value of the abstract game, which can be computed~\cite{oliu2021new}, is an approximation of the uniform value of the Doeblin hidden stochastic game.
We bound the approximation error in terms of the size of the abstract game, which is large enough to apply the Doeblin condition.
We conclude with an explicit algorithm that approximates the uniform value in Doeblin hidden stochastic games, given explicit parameters of the Doeblin condition.

%\paragraph{Blind}
In the blind setting, i.e., when signals are uninformative, we consider the ergodicity condition previously introduced by Chatterjee et al.~\cite{chatterjee2025ergodic}.
This condition formalizes the idea that the belief dynamics forget the distant past, i.e., starting from any two initial beliefs, a sufficiently long sequence of action pairs drives the beliefs to within epsilon of each other.
We show that this property implies the Doeblin condition with explicit parameters, see~\Cref{Theorem: Ergodic Blind is Doeblin}.
Because computing the uniform value is undecidable for ergodic blind stochastic games~\cite[Theorem 3]{chatterjee2025ergodic}, we deduce in \Cref{Theorem: Ergodic Blind is Doeblin} that computing the uniform value in Doeblin hidden stochastic games is undecidable.

%\paragraph{Hidden}
In the hidden setting, we study two structural conditions.
First, we provide a natural extension of the ergodicity condition from the blind case to the hidden case, introducing \emph{weakly ergodic hidden stochastic games} (\Cref{Definition: Ergodic Hidden Stochastic Game}).
Ziliotto~\cite{ziliotto2016zero} previously introduced a hidden stochastic game in which the uniform value does not exist.
Building on this construction, we provide an example of a weakly ergodic hidden stochastic game where the uniform value fails to exist.
Second, we study the primitive condition previously introduced for sets of matrices~\cite{cohen1982sets,seneta2006non}.
For each action pair and signal, there is an update matrix corresponding to the transition probability on the state space after taking an action and receiving a signal.
The primitive condition on this set of matrices formalizes the idea that, after a sufficiently long period, every state-signal pair can occur with positive probability.
We prove that this property implies the Doeblin condition in \Cref{Theorem: Primitive Hidden is Doeblin}, and therefore primitive hidden stochastic games have a uniform value.

\paragraph{Related Literature}
Our results relate to the Doeblin condition for general stochastic systems, the existence of value in games, and the computability of the value of systems that include both stochastic and nondeterministic uncertainty.

%\paragraph{Doeblin Condition}
The Doeblin condition is a well-studied property of Markov chains, even in infinite-state environments~\cite{meyn2012markov,revuz2008markov}.
In the one-player setting, Yushkevich~\cite{yushkevich1997blackwell} proved the existence of the uniform value in MDPs with a Borel state space and compact action spaces under a Doeblin-type condition.
We refer the reader to~\cite{arapostathis1993discrete} for a survey in the one-player setting.
In contrast, results in the two-player setting are less abundant, even in the full information setting. 
Federgruen~\cite{federgruen1978n} studied full information multiplayer stochastic games with countable state space and compact action sets under a Doeblin-type condition.
They proved the existence of a stationary equilibrium for the limsup average reward objective.
However, this condition essentially requires a unichain structure, which is typically a very restrictive assumption for hidden stochastic games.

%\paragraph{Existence}
Counterexamples to the existence of the uniform value in hidden stochastic games were given by Ziliotto~\cite{ziliotto2016zero}.
In fact, nonexistence occurs even in blind stochastic games.
Positive results have been found for special subclasses.
For POMDPs and MDPs, Rosenberg et al.~\cite{rosenberg2002blackwell} and Blackwell~\cite{blackwell1962discrete} respectively proved the existence of the uniform value.
For blind stochastic games, Venel~\cite{venel2015commutative} established the existence of the uniform value under commutativity assumptions, without addressing algorithmic aspects.
Recently, Chatterjee et al.~\cite{chatterjee2025ergodic} proved both the existence of the uniform value and the decidability of its approximation under an ergodicity condition. 
However, they do not consider the general setting with partial observations.

%\paragraph{Approximability}
The decidability of computing or approximating the value of hidden stochastic games has been extensively studied.
However, most results~\cite{belly2025revelations, chatterjee2014partial,chatterjee2013survey,chatterjee2010probabilistic, gimbert2014deciding,asadi2025revealing} concern logical objectives, a different class of objectives; see Chatterjee et al.~\cite{chatterjee2012survey} for a survey.
By contrast, results on the decidability of the uniform value are scarce.
For blind MDPs, the general problem is undecidable~\cite{madani2003undecidability}, and decidability has been shown under strong assumptions~\cite{chatterjee2010probabilistic}.
For blind stochastic games, Chatterjee et al.~\cite{chatterjee2025ergodic} recently established decidability under an ergodicity condition. 
For POMDPs, Chatterjee et al.~\cite{chatterjee2022finite} proved that the single-player case requires only finite-memory strategies for approximating the limit, but this result does not by itself yield decidability.

\paragraph{Novelty}
Our contributions bring several new insights for hidden stochastic games.
First, to the best of our knowledge, this is the first work to use a Doeblin condition for hidden stochastic games to prove the existence of the uniform value and the decidability of its approximation.
Second, our results (\Cref{Theorem: Doeblin Hidden Stochastic Games}) are ``tight'': 
while the exact computation of the uniform value is decidable in stochastic games, and the uniform value in hidden stochastic games does not exist in general, exact computation of the uniform value remains undecidable for Doeblin hidden stochastic games.
This establishes a clear separation, highlighting that Doeblin hidden stochastic games cannot be simply reduced to stochastic games.
Third, because objectives such as reachability~\cite{madani2003undecidability} and other $\omega$-regular objectives~\cite{chatterjee2012survey} can be expressed via the uniform value, our results apply to those objectives as well. 

\paragraph{Outline}
\Cref{Section: Preliminaries} first introduces hidden stochastic games, shows their equivalence with belief stochastic games, and states our computational problems.
\Cref{Section: Class Description and Main Results} defines \emph{Doeblin hidden stochastic games}, \emph{ergodic blind stochastic games}, and \emph{primitive hidden stochastic games}, and presents our main results.
\Cref{Section: Proof of Doeblin Hidden Stochastic Games} establishes the existence of the uniform value and the decidability of its approximation. 
\Cref{Section: proof Primitive and Ergodic are Doeblin} shows that ergodic blind stochastic games and primitive hidden stochastic games satisfy the Doeblin condition. 
Finally, we discuss natural extensions of ergodicity and primitivity conditions in \Cref{Section: Discussion}.

\paragraph{Notation}
Calligraphic letters (e.g., $\I, \J,\H,\K,\S$) denote sets; their elements (e.g., $i$, $j$, $h$, $k$, $s$) appear in lowercase; random variables use uppercase (e.g., $I$, $J$, $H$, $K$, $S$). 
For a finite set $\C$, let $\Delta(\C)$ be the set of probability distributions over $\C$ and let $\delta_{\{c\}}$ denote the Dirac measure at $c \in \C$. 
For integers $a$ and $b$, the notation $[a \until b]$ represents the integer set $\{a,a+1,\ldots,b\}$.
The set of real numbers is denoted by $\mathbb{R}$, while $\NN$ and $\NN^*$ represent the sets of natural numbers and non-zero natural numbers, respectively. 
For a matrix $P \in \RR^{|\K| \times |\K|}$, we write $P > 0$ to indicate that every entry in $P$ is strictly positive and $(P)_{k \in \K}$ to denote its $k$-th column.
Given a vector $b \in \RR^{|\K|}$, $b^\top$ denotes its transpose.

%%%%%%%
%%%%%%%
%%%%%%%
%%%%%%%
%%%%%%%
%%%%%%%

\section{Preliminaries}\label{Section: Preliminaries}

We introduce the class of \emph{two-player zero-sum hidden stochastic games}. 
Then, we describe its reduction to a stochastic game over the set of beliefs.
Finally, we state the exact and approximation problems for hidden stochastic games.

\subsection{Framework}

\paragraph{Game}
A \emph{two-player zero-sum hidden stochastic game}, denoted by $\Gamma$, is defined by a $6$-tuple $\Gamma=(\K,\I,\J,\S,p,g)$, where:
\begin{itemize}
    \item 
        $\K$ is the finite set of states;
    \item 
        $\I$ and $\J$ are the finite sets of actions for Player $1$ and Player $2$, respectively;
    \item 
        $\S$ is the finite set of signals;
    \item 
        $p \colon \K\times \I\times\J\rightarrow\Delta(\K\times \S)$ is the transition probability function;       
    \item 
        $g \colon \K\times\I\times \J\rightarrow[0,1]$ is the stage reward function.
\end{itemize}

\paragraph{Related Models}
Stochastic games~\cite{shapley1953stochastic} are hidden stochastic games in which the observed signal contains the successor state.
Blind stochastic games~\cite{venel2015commutative} are hidden stochastic games in which the set of signals is a singleton. 
In this case, the players are said to be \emph{blind}.

\paragraph{Matrices}
For each action pair $(i,j)\in \I\times \J$ and signal $s\in\S$, define the (sub-stochastic) matrix $P(i,j,s)$ by setting, for all states $k,k'\in\K$,
\[
    P_{k,k'}(i,j,s) \defas p(k',s \given k,i,j) \defas p(k,i,j)(k',s).
\]
Denote by $\P\defas\left\{P(i,j,s)\given (i,j,s)\in\I\times\J\times \S\right\}$ the set of all such sub-stochastic matrices, that is, $\sum_{k'\in \K}P_{k,k'}\le 1$ for all $k\in \K$ and $P_{k,k'}\ge 0$ for all $k,k'\in \K$.
Each matrix $P\in\P$ represents the joint probabilities of transitioning from the current state $k\in \K$ to a subsequent state $k'\in \K$ and observing signal $s\in\S$, given the chosen action pair $(i,j)\in\I\times \J$. 

\paragraph{Dynamic}
A hidden stochastic game starting from $b_1\in \Delta(\K)$, denoted by $\Gamma(b_1)$, evolves as follows. 
An initial state $K_1$ is selected according to $b_1$. 
The players know $b_1$, but do not know $k_1$, the realization of $K_1$.
At each stage $m\ge 1$, Player $1$ and Player $2$ simultaneously select actions $I_m$ and $J_m$, respectively.
A stage reward $G_m\defas g(K_m,I_m,J_m)$ is generated, but \emph{not} observed by the players.
Subsequently, the successor state $K_{m+1}$ and the public signal $S_{m+1}$ are drawn according to $p( \,\cdot \given K_m, I_m, J_m)$.
Finally, the players observe the triple $(I_m,J_m,S_{m+1})$ but not $K_{m+1}$ or $G_m$.

\paragraph{History} 
A \emph{history} before stage $m$ is a sequence $(i_1,j_1,s_2,\ldots,i_{m-1},j_{m-1},s_m)$. 
Let $\H_m\defas (\I\times \J\times\S)^{m-1}$ be the set of histories before stage $m$, with $(\I\times\J\times \S)^{0}\defas\{\emptyset\}$. 

\paragraph{Strategy} 
A (history-dependent) \emph{strategy} for Player $1$ is a mapping $\sigma\colon\bigcup_{m\ge 1}\H_m\to \Delta(\I)$, where $\sigma(i\given h_m) \defas \sigma(h_m)(i)$ is the probability of choosing action $i\in \I$ given the history $h_m \in \H_m$.
Similarly, a strategy for Player $2$ is a mapping $\tau\colon \bigcup_{m\ge 1}\H_m\to \Delta(\J)$, where $\tau(j\given h_m) \defas \tau(h_m)(j)$ is the probability of taking action $j\in \J$ given the history $h_m\in \H_m$.
We denote the set of strategies for Player $1$ and Player $2$ by $\Sigma$ and $\T$, respectively. 

\paragraph{Shift Strategy}
Given $m\in \NN^*$ and $h_m\in\H_m$, we define $h_m$-shift strategies for hidden stochastic games, as previously introduced for the one-player case in~\cite[Definition 6.4, p.13]{chatterjee2022finite}. 
The $h_m$-shift of a strategy $\sigma\in \Sigma$ (resp., $\tau\in \T$) for Player $1$ (resp., Player $2$) is the strategy $\sigma[h_m]$ (resp., $\tau[h_m]$), defined by $\sigma[h_m](h_{m'})\defas\sigma (h_m,h_{m'})$ (resp., $\tau[h_m](h_{m'})\defas\tau(h_m,h_{m'})$) for all $m'\in\NN^*$. 
Intuitively, $\sigma[h_m]$ (resp., $\tau[h_m]$) denotes the continuation of the strategy $\sigma$ (resp., $\tau$) given that the history of the first $m$ stages is $h_m$.

\paragraph{Probability Measure} 
Given an initial belief $b_1\in\Delta(\K)$ and a strategy pair $(\sigma,\tau)\in \Sigma\times \T$, let $\PP_{\sigma,\tau}^{b_1}$ be the induced probability measure over the set of plays $\Omega=(\K\times\I\times\J\times \S)^\NN$. 
Similarly, let $\EE^{b_1}_{\sigma,\tau}$ be the corresponding expectation under this measure. 

\paragraph{Random History}
Given an initial belief $b_1 \in\Delta(\K)$, a strategy pair $(\sigma,\tau) \in \Sigma \times \T$, and a stage $m \in \NN$, define the \emph{random history} before stage $m$ by $H_m \defas(I_1,J_1,S_2,\ldots,I_{m-1},J_{m-1},S_{m})$, which takes values in $\H_m$.

\paragraph{Admissible History}
Given an initial belief $b_1 \in\Delta(\K)$ and a horizon $m \in \NN$, define the set of admissible histories from $b_1 \in \Delta(\K)$ by 
\[
    \H_m(b_1) \defas \left\{ h_m \in \H_m \,\middle|\, \exists (\sigma,\tau) \in \Sigma \times \T \quad \PP^{b_1}_{\sigma, \tau} (H_m=h_m)>0\right\}.
\]

\paragraph{Payoff} 
For a finite horizon $n \in \NN^*$ and a strategy pair $(\sigma,\tau)\in \Sigma\times \T$, the $n$-stage payoff is
\[
    \gamma_{n}(b_1,\sigma,\tau)\defas\EE_{\sigma,\tau}^{b_1}\left(\dfrac{1}{n}\sum_{m=1}^{n} G_m\right).
\]

\paragraph{Uniform Value}
By~\cite{mertens2015repeated}, the $n$-stage game has an $n$-stage value $v_n(b_1)$ given by
\begin{equation*}
    v_n(b_1)\defas\max_{\sigma\in \Sigma}\min_{\tau\in \T}\gamma_{n}(b_1,\sigma,\tau)=\min_{\tau\in \T}\max_{\sigma\in \Sigma}\gamma_{n}(b_1,\sigma,\tau).
\end{equation*}
A hidden stochastic game $\Gamma$ has a \emph{uniform value} $v\colon\Delta(\K)\to [0,1]$ if, for every $b_1\in\Delta(\K)$ and $\eps>0$, there exists a strategy pair $(\sigma^*,\tau^*)\in \Sigma\times \T$ and $\overline{n}\in \NN^*$ such that, for all $n \ge \overline{n}$,
\begin{align*}
    & \forall \tau\in \T\colon\quad\gamma_n (b_1,\sigma^*,\tau) \ge v(b_1) - \eps
\end{align*}
and, 
\begin{align*}
    \forall \sigma\in \Sigma\colon\quad\gamma_n (b_1,\sigma,\tau^*) \le v(b_1) + \eps.
\end{align*}
By Mertens and Neyman~\cite{mertens1981stochastic}, every stochastic game has a uniform value.  
However, by Ziliotto~\cite{ziliotto2016zero}, the uniform value need not exist in hidden stochastic games. 

\subsection{Belief Stochastic Games}

This section highlights the standard reduction of hidden stochastic games to an equivalent stochastic game on the set of beliefs~\cite{mertens2015repeated}.
We first formally define stage beliefs as follows.

\paragraph{Stage Belief}
Given an initial belief $b_1 \in \Delta(\K)$ and an admissible history $h_m \in \H_m(b_1)$, define the belief after observing $h_m$ by
\begin{equation*}
    b^{b_1}_{h_m}(\cdot) \defas \PP_{\sigma,\tau}^{b_1}(K_m = \cdot \given H_m=h_m) ,
\end{equation*}
where $(\sigma, \tau)$ is an arbitrary strategy pair such that $\PP_{\sigma,\tau}^{b_1}(H_m=h_m) > 0$, which exists by admissibility. 
Given an initial belief $b_1 \in \Delta(\K)$ and a strategy pair $(\sigma, \tau)$, the random belief at stage $m$ is defined by
\[
    B_m(\cdot) \defas \PP^{b_1}_{\sigma,\tau}(K_m = \cdot \given H_m) .
\]

\paragraph{Belief Stochastic Game}
Given a hidden stochastic game $\Gamma=(\K,\I,\J,\S,p,g)$ with initial belief $b_1\in\Delta(\K)$, the corresponding \emph{belief stochastic game} is defined by a $5$-tuple $\G=(\Delta(\K),\I,\J,\overline{p},\overline{g})$, where:
\begin{itemize}
    \item 
        $\Delta(\K)$ is the infinite set of beliefs;
    \item
        $\I$ and $\J$ are the respective finite sets of actions of Player $1$ and Player $2$; 
    \item 
        $\overline{p}\colon\Delta(\K)\times \I\times \J\rightarrow\Delta(\Delta(\K))$, written as $\overline{p}(b'\given b,i,j)$, is the belief transition function.
        For every $m\in \NN^*$, the \emph{belief transition function} is defined by
        \begin{align*}
            &\overline{p}(\cdot\given b_m,i_m,j_m)\defas \sum_{s_{m+1} \in \S} \mathbb{1}_{\left\{\psi(b_m,i_m,j_m,s_{m+1})=\cdot\right\}} \, \left( \sum_{k,k'\in\K}P_{k,k'}(i_m,j_m,s_{m+1})b_{m}(k)\right),
        \end{align*}
        where $\psi(b_{m},i_{m},j_{m},s_{m+1})$ is the belief update with 
        \[
            \psi(b_{m},i_{m},j_{m},s_{m+1})(\cdot)\defas \dfrac{\sum\limits_{k\in\K} p(\cdot,s_{m+1}\given k,i_{m},j_{m})b_{m}(k)}{\sum\limits_{k,k'\in\K}p(k',s_{m+1}\given k,i_{m},j_{m})b_{m}(k)};
        \]
    \item 
        $\overline{g}\colon\Delta(\K)\times\I\times \J\rightarrow[0,1]$ is the stage reward defined by, for every $b\in\Delta(\K)$ and $(i,j)\in\I\times\J$,
        \[
            \overline{g}(b,i,j)\defas \sum_{k\in\K} b(k)g(k,i,j).
        \]
\end{itemize}

\paragraph{Payoff and Value}
A belief stochastic game proceeds as a (fully observable) stochastic game in the space of beliefs, and the stage reward is denoted by $\overline{G}_m\defas \overline{g}(B_m, I_m, J_m)$.
Given a finite horizon $n \in \NN^*$, the $n$-stage payoff of the belief stochastic game for a strategy pair $(\sigma,\tau)\in \Sigma\times \T$ is defined by
\begin{equation*}
    \overline{\gamma}_n (b_1,\sigma,\tau)\defas \EE_{\sigma,\tau}^{b_1}\left(\dfrac{1}{n}\sum_{m=1}^{n}\overline{G}_m\right),
\end{equation*} 
and its $n$-stage value exists~\cite{mertens2015repeated} and is defined by 
\[
    \overline{v}_n(b_1)\defas\max_{\sigma\in \Sigma}\min_{\tau\in \T}\overline{\gamma}_n(b_1,\sigma,\tau)=\min_{\tau\in \T}\max_{\sigma\in \Sigma}\overline{\gamma}_n(b_1,\sigma,\tau).
\]
By~\cite{mertens2015repeated}, the $n$-stage value of the belief stochastic game coincides with that of the hidden stochastic game. 
However, because the state space is infinite, one cannot directly apply Mertens and Neyman~\cite{mertens1981stochastic} to conclude that the uniform value exists.

\subsection{Computability}

A \emph{decision problem} determines whether a specific property holds for a given input.
A class of decision problems is said to be \emph{decidable} if there exists an algorithm, i.e., a Turing machine that halts on all inputs and correctly determines whether the property is true or false.
If no such algorithm exists, the class of decision problems is said to be \emph{undecidable}.
We define the exact and approximation problems for the uniform value in hidden stochastic games as follows.

\begin{Definition}[Decision version of computing the uniform value]
    Given a hidden stochastic game $\Gamma$ with initial belief $b_1 \in \Delta(\K)$ and a threshold $x \in [0,1]$, the problem asks whether $v(b_1) > x$ holds, where $v(b_1)$ is the uniform value of $\Gamma$.
\end{Definition}

\begin{Definition}[Decision version of approximating the uniform value]
    Given a hidden stochastic game $\Gamma$ with initial belief $b_1 \in \Delta(\K)$, a threshold $x \in [0,1]$, and an error margin $\eps > 0$, the problem asks whether $v(b_1) > x + \eps$ holds, where $v(b_1)$ is the uniform value of $\Gamma$. Moreover, if $v(b_1) \in [x - \eps, x + \eps]$, then any answer is considered correct.
\end{Definition}

\paragraph{Undecidability}
By~\cite[Theorem 4.4]{madani2003undecidability}, both problems are undecidable in blind MDPs. 
Because a negative result holds for broader classes, the undecidability results carry over to the class of hidden stochastic games where the uniform value exists. 

%%%%%%%
%%%%%%%
%%%%%%%
%%%%%%%
%%%%%%%
%%%%%%%

\section{Main Results}\label{Section: Class Description and Main Results}

We introduce the main class studied in this paper, namely \emph{Doeblin hidden stochastic games}, and state our main results.
Then, we present sufficient conditions on the transitions, namely \emph{ergodicity} and \emph{primitivity}, that guarantee the Doeblin condition.
Define the $L_1$-norm by, for every $b\in\RR^{|\K|}$, $\left\|b\right\|_1\defas \sum_{k\in \K}\left|b(k)\right|$.

\subsection{Doeblin Condition}

This section identifies a general subclass, namely \emph{Doeblin hidden stochastic games}, for which the uniform value exists and whose approximation problem is decidable.

\begin{Definition}[Doeblin hidden stochastic game]
\label{Definition: Doeblin Hidden Stochastic Game}
    A hidden stochastic game $\Gamma$ is Doeblin if, 
    for every $\eps>0$, 
    there exists $m_\eps \in \NN^*$ and $\delta_\eps > 0$ such that,
    for all $\left(\sigma,\tau\right)\in \Sigma\times \T$, 
    there exists $\overline{b}\in \Delta(\K)$ such that,
    for all $b\in \Delta(\K)$, 
    \[
        \PP^{b}_{\sigma,\tau} \left( 
            \left\| B_{m_\eps} - \overline{b} \right\|_1 \le \eps
        \right)
            \ge \delta_\eps.
    \]
\end{Definition}

\paragraph{Main Contribution}
Our main contributions on Doeblin hidden stochastic games are the following.

\begin{Theorem}\label{Theorem: Doeblin Hidden Stochastic Games}
    % For every Doeblin hidden stochastic game:
    % \begin{itemize}
    %     \item 
    %         The uniform value exists and is independent of the initial belief.
    %     \item 
    %         There exists an explicit algorithm to approximate its uniform value, which relies on the mapping $\eps \mapsto (m_\eps, \delta_\eps)$ from the Doeblin condition.
    % \end{itemize}
    The following statements hold:
    \begin{itemize}
        \item 
            Every Doeblin hidden stochastic game admits a uniform value. In particular, the uniform value is independent of the initial belief.
        \item   
            If the mapping $\eps \mapsto (m_\eps, \delta_\eps)$ is computable, then the approximation problem for Doeblin hidden stochastic games is decidable.
    \end{itemize}
\end{Theorem}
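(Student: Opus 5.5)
We follow the route sketched in the introduction: approximate the belief stochastic game $\G$ by a finite abstract stochastic game, compare $n$-stage payoffs through a block coupling, and then transfer the existence and computability of the uniform value from finite stochastic games~\cite{mertens1981stochastic,oliu2021new}.

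\textbf{Abstract game.} Fix $\eps>0$, with associated $m_\eps$ and $\delta_\eps$. Choose a finite $\eps$-net $\D_\eps$ of $\Delta(\K)$ in $L_1$. The abstract game $\G_\eps$ is a stochastic game whose states are the pairs (grid point $d\in\D_\eps$, position inside a block of length $L$). Within a block, play proceeds as in $\Gamma$ started from the belief $d$, and the reached beliefs are tracked exactly. Because histories of length at most $L$ are finite, this gives finitely many states: the state is $(d,h)$ with $h\in\H_{\le L}$. At the end of the block, the current belief is projected to its nearest grid point. The abstract game is therefore a finite stochastic game, which has a uniform value $w_\eps$ that is computable up to any precision.

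\textbf{Coupling step.} This is the key estimate: we show $|v_n(b)-w_{\eps,n}(d)|\le C(\eps+ m_\eps/(\delta_\eps L)+\dots)$ for all $n$ large, uniformly in $b$ and $d$. The main tool is a contraction lemma derived from the Doeblin condition. For any strategy pair $(\sigma,\tau)$ and any two initial beliefs $b,b'$, run the two belief processes under the same action and signal randomness in sub-blocks of length $m_\eps$. By \Cref{Definition: Doeblin Hidden Stochastic Game}, each sub-block has probability at least $\delta_\eps$ that both posteriors land within $\eps$ of the same $\overline b$, and hence within $2\eps$ of each other. The Doeblin property only gives marginal probabilities, so we need a maximal coupling of the two signal laws under common strategies. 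We plan to handle this by writing the law of the history as a mixture over the initial state and using $\PP^{b}\ge \min_k b(k)$-type domination. If that fails, we fall back to the weaker statement that the game started at $b$ is close in value to the game started at $\overline b$, which is enough for our purposes.

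After $T$ sub-blocks, the probability that the beliefs have not merged is at most $(1-\delta_\eps)^T$. Once merged, the conditional continuation payoffs differ by at most $O(\eps)$ for the rest of the block, because $\overline g$ is $1$-Lipschitz and the belief update is an $L_1$ non-expansion in expectation. Taking $L\gg Tm_\eps$, the first part of each block, where beliefs may be far apart, contributes $Tm_\eps/L$, and the second part contributes $O(\eps)+(1-\delta_\eps)^T$. Strategies are transferred block by block: a strategy in $\G_\eps$ is lifted to $\Gamma$ by playing, within each block, as if the block had started at the grid point, and conversely. This gives the same error bound for both players.

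\textbf{Conclusion.} From the uniform estimate, $w_\eps$ does not depend on the initial grid point up to $O(\text{err}(\eps))$, and $(w_\eps)$ is Cauchy as $\eps\to0$. Let $v$ be its limit. The $\eps$-optimal uniform strategies of $\G_\eps$, lifted to $\Gamma$, guarantee $v\pm$ error for all large $n$, so $v$ is the uniform value and it is independent of $b_1$. For decidability, when $\eps\mapsto(m_\eps,\delta_\eps)$ is computable we choose $\eps$, $T$ and $L$ explicitly so that the error is below the target margin. We then build $\G_\eps$ and approximate $w_\eps$ with~\cite{oliu2021new}, which answers the approximation problem. The hard step is the coupling itself, since the Doeblin condition only controls marginals.
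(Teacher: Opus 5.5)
Your architecture matches the paper's: a block abstract game with exact within-block histories and projection at block ends, sub-blocks of length $m_\eps$, a negligible merging phase at the start of each block, lifting of uniform $\eps$-optimal strategies, and \cite{oliu2021new} for the algorithm. However, the step you yourself flag as the hard one is not established, and the repairs you suggest do not work.

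The Doeblin condition gives two separate marginal bounds: $\PP^{b}(\|B_{m_\eps}-\overline b\|_1\le\eps)\ge\delta_\eps$, and the same bound from $b'$. It gives no lower bound on the probability that both happen under a coupling with common action and signal randomness. There are three problems with the common-randomness route.
\begin{itemize}
\item A synchronous coupling can make the two events disjoint.
\item A maximal coupling of the signal laws maximizes the probability of equal signals, which is not the event you need.
\item Even along a common history $h$, the posteriors $b^{b}_{h}$ and $b^{b'}_{h}$ need not be close, since the Doeblin condition says nothing about a fixed history.
\end{itemize}
Domination of the form $\PP^{b}\ge\min_k b(k)\,(\cdot)$ gives an overlap that vanishes for beliefs without full support, and it has no uniform lower bound anyway. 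Such beliefs do occur: grid points on faces of the simplex, and posteriors after revealing signals. Your fall-back of comparing the values from $b$ and from $\overline b$ is close to the conclusion you want rather than a tool for it. Moreover, the block-by-block transfer needs closeness of payoffs under specific history-dependent strategy pairs, not closeness of values.

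The missing idea is to give up on synchronizing the randomness. In a sub-block where the two beliefs are still more than $2\eps$ apart, the paper runs both games with the same strategy pair, viewed as a function of each game's own continuation history, but with independent randomness. Because the strategy pair is common, the Doeblin condition provides a single $\overline b$ valid for both starting beliefs $b_{m_{\ell,r}}$ and $\proj(x_{m_{\ell,r}})$. Independence then gives probability at least $\delta_\eps^2$ that both land within $\eps$ of $\overline b$, hence within $2\eps$ of each other. So the probability of not merging within $\omega$ sub-blocks is at most $(1-\delta_\eps^2)^\omega$, not $(1-\delta_\eps)^T$.

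After merging, no pathwise tracking of the beliefs is needed. The continuation strategy pair is frozen for the rest of the block, and \Cref{Lemma: continuity reward} bounds the difference of expected payoffs under a common strategy pair by the $L_1$ distance of the starting beliefs. This is the precise form of your ``non-expansion in expectation.'' With this step in place, the rest of your outline follows the paper's argument.

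A smaller point: the paper projects onto a grid point with the same support as the belief, so that admissible histories correspond between the two games (\Cref{Result: Mapping admissibility}). With a plain nearest-point projection, you would have to handle histories that are admissible in one game but not in the other.
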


% \Cref{Theorem: Doeblin Hidden Stochastic Games} has the following consequence.

% \todo[inline]{@For Rai: When reading this part I'm wondering if simply putting the corollary in the Theorem 3.2 Item 2 would not make it simpler? It may feels weird that an important result is "only" a corollary. I propose:
% - Every Doeblin hidden stochastic game admits a uniform value. In particular, the uniform value is independent of the initial belief.
% - 
% }

% \begin{Corollary}
%     If the mapping $\eps \mapsto (m_\eps, \delta_\eps)$ is computable, then the approximation problem for Doeblin hidden stochastic games is decidable.
% \end{Corollary}

% \todo[inline]{RAI: Evaluate how to map back this result to the decidability of approximation. Because now the theorem does not talk about the decision problem, we should add something that relates them.}

\paragraph{Classic Doeblin Condition}
The Doeblin condition has been extensively studied for Markov chains under the following formulation.
A Markov chain $\Gamma$ satisfies the Doeblin condition if there exists $m \in \NN^*$, $\alpha>0$, and $\nu\in \Delta(\Delta(\K))$ such that, for every $b\in \Delta(\K)$ and measurable set $\A \subseteq \Delta(\K)$,
\[
    \PP^b( B_m \in \A) \ge \alpha \, \nu(\A).
\]
Given $\eps>0$ and $\overline{b}\in \Delta(\K)$, define the closed $\ell_1$-ball of radius $\eps$ centered at $\overline{b}\in \Delta(\K)$ by 
\[
    \C_{\eps}(\overline{b})\defas\left\{b\in \Delta(\K)\,\middle|\, \left\|b-\overline{b}\right\|_1\le \eps\right\}.
\]
%The Doeblin condition is equivalent to the following property.
The Doeblin condition implies the following property.
For every $\eps > 0$, there exist $m_\eps \in \NN^*$, $\delta_\eps > 0$, and $\overline{b}\in \Delta(\K)$ such that, for all $b\in \Delta(\K)$,
\[
    \PP^{b}(B_{m_\eps}\in \C_\eps(\overline{b})) \ge \delta_\eps .
\]
%Indeed, in one direction it is enough to choose some $\eps > 0$ and define $\nu \propto \mathbb{1}_{\C_\eps(\overline{b})} \in \Delta(\Delta(\K))$.
Indeed, %For the reverse direction, 
because $\nu$ is a probability measure on the compact set $\Delta(\K)$, there exists $\overline{b} \in \supp(\nu)$ such that, for every $\eps > 0$, we have that $\nu(\C_\eps(\overline{b})) > 0$.
Then, we conclude by defining $m_\eps \defas m$ and $\delta_\eps \defas \alpha\nu(\C_\eps(\overline{b}))>0$.
% completes the equivalence.

\subsection{Sufficient Conditions}\label{Subsection: sufficient conditions}

We present sufficient conditions for the Doeblin condition by requiring structural properties on the set of matrices $\P$.
We first observe that the belief update can be represented using a forward product of matrices.
Consider a hidden stochastic game $\Gamma=(\K,\I,\J,\S,p,g)$ with an initial belief $b_1\in\Delta({\K})$. 
Define the forward product of matrices by $T\colon\bigcup_{m\in \NN^*}\H_m\to \RR^{|\K|\times |\K|}$ by
\begin{equation*}
    T(h_{m})\defas P(i_1,j_1,s_{2})P(i_2,j_2,s_{3})\cdots P(i_{m-1},j_{m-1},s_{m}).
\end{equation*}
By induction, we have that, for every initial belief $b_1\in\Delta(\K)$, admissible history $h_{m}\in\H_{m}(b_1)$ with $m\ge 1$, and state $k'\in\K$, 
\begin{align*} 
    b_{m}(k')%=\dfrac{\sum_{k\in\K}b_1(k)t_{k,k'}(h_{m})}{\sum_{k,k'\in\K}b_1(k)t_{k,k'}(h_{m})}
    =\dfrac{b_1^\top (T(h_{m}))_{k'}}{b_1^\top T(h_{m})\mathbf{1}}.
\end{align*}

\subsubsection{Ergodicity}

We present the subclass of ergodic blind stochastic games, previously introduced by Chatterjee et al.~\cite{chatterjee2025ergodic}.
This subclass leverages the following definition from~\cite[Definition 4.4, p. 136]{seneta2006non}.

\paragraph{Ergodicity}
A sequence of stochastic matrices $\{ P_m \}_{m\ge 1}$ on $\K\times\K$ is ergodic if, for all $k, \overline{k}, k' \in \K$, 
\begin{equation*}
    \lim_{m\to\infty} (P_1P_2\cdots P_m)_{k,k'} - (P_1P_2\cdots P_m)_{\overline{k},k'} = 0. 
\end{equation*}

\paragraph{Coefficient of Ergodicity}
We introduce the coefficient of ergodicity $\tau_e$ to characterize the ergodic property.
By~\cite{seneta2006non}, given a stochastic matrix $P$, define $\tau_e$ by
\begin{equation*}
    \tau_e(P)\defas\dfrac{1}{2}\max_{k,\overline{k}\in \K}\sum_{k'=1}^{|\K|}\left|P_{k,k'}-P_{\overline{k},k'}\right|.
\end{equation*}
By~\cite[Lemma 4.1, p. 136]{seneta2006non}, ergodicity is equivalent to $\lim_{m\to\infty}\tau_e(P_1P_2\cdots P_m)=0$.\\

We define \emph{ergodic blind stochastic games} as follows.

\begin{Definition}[Ergodic blind stochastic game]
\label{Definition: Ergodic blind stochastic games}
    A blind stochastic game $\Gamma$ is ergodic if, for all $\eps>0$, there exists an integer $m_\eps\in \NN^*$ such that, for every history $h_{m_\eps}\in \H_{m_\eps}$,
    \begin{equation}
        \tau_e(T(h_{m_\eps})) \le \eps .\label{Equation: Conditions for Ergodic BSGs}
    \end{equation}
\end{Definition}

\begin{remark}
    Observe that, in the blind setting, a history is simply a sequence of action pairs.
\end{remark}

\paragraph{Conditions}
Sufficient conditions for the ergodicity condition in blind stochastic games can be found in~\cite{chatterjee2025ergodic}.
For example, a blind stochastic game is said to be Markov if for every $P\in \P$, there exists $k'\in \K$ such that, for every $k\in \K$, we have $P_{k,k'}>0$.
By Chatterjee et al.~\cite{chatterjee2025ergodic}, every Markov blind stochastic game is ergodic.

\paragraph{Contribution}
We show that ergodic blind stochastic games satisfy the Doeblin condition.

\begin{Theorem}\label{Theorem: Ergodic Blind is Doeblin}
    Every ergodic blind stochastic game satisfies the Doeblin condition, with an explicit mapping $\eps \mapsto (m_\eps, \delta_\eps)$.
    In particular, the exact problem in Doeblin hidden stochastic games is undecidable.
\end{Theorem}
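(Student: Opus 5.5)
In the blind setting the belief process is deterministic once the action sequence is fixed, and contraction of the forward product $T(h_m)$ in the coefficient of ergodicity $\tau_e$ will be turned directly into concentration of the belief. The plan is to show that ergodicity gives the Doeblin condition with $\delta_\eps = 1$ and $m_\eps$ equal to the integer $m_{\eps/2}$ from \Cref{Definition: Ergodic blind stochastic games}.

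\textbf{Key contraction estimate.} Since signals are trivial, every matrix in $\P$ is stochastic and $b_1^\top T(h_m)\mathbf{1} = 1$. The belief at stage $m$ is therefore simply $b_m^\top = b_1^\top T(h_m)$, where $h_m$ is the sequence of realized action pairs. For a stochastic matrix $T$ and any $b,b' \in \Delta(\K)$, I will use the standard inequality (Seneta)
\[
    \left\| b^\top T - b'^\top T\right\|_1 \le 2\,\tau_e(T)\cdot \tfrac12\|b-b'\|_1 \le 2\tau_e(T).
\]
It is proved by writing $b - b'$ as a signed measure with zero total mass and applying the triangle inequality row by row. In particular, if $\tau_e(T(h_m)) \le \eps/2$, then every initial belief is mapped into the $\ell_1$-ball of radius $\eps$ around the row $e_{k_0}^\top T(h_m)$ for any fixed $k_0\in\K$.

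\textbf{Handling randomized strategies.} The difficulty is that $\overline b$ must not depend on $b$, while under mixed strategies the realized action sequence is random. In the blind setting, however, histories are action sequences only, and their law under $\PP^b_{\sigma,\tau}$ does not depend on $b$: the strategies see nothing about the state. Fix $(\sigma,\tau)$ and choose any history $h^*$ of length $m_\eps - 1$ pairs with positive probability under $(\sigma,\tau)$. This probability is the same for every $b$. Set $\overline b \defas e_{k_0}^\top T(h^*)$. For every $b$, on the event $\{H_{m_\eps}=h^*\}$ we get $\|B_{m_\eps}-\overline b\|_1\le\eps$.

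This yields only $\PP \ge \PP(H_{m_\eps}=h^*)$, which is not uniformly bounded below. The fix is better than that: every history satisfies $\tau_e(T(h))\le \eps/2$, so by the estimate $B_{m_\eps}$ lies within $\eps$ of $e_{k_0}^\top T(H_{m_\eps})$. But $\overline b$ must be a single point, so this alone does not suffice. I will instead exploit contraction through concatenation. Take $m_\eps \defas m_{\eps/2}+1$ is not enough either, so I will argue as follows. The belief $B_m$ depends on $b$ only through a $\tau_e$-small perturbation, hence $B_m \in \C_\eps(\beta(H_m))$, where $\beta(h)\defas e_{k_0}^\top T(h)$ ranges over a compact set. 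Cover $\Delta(\K)$ by $N$ balls of radius $\eps$, where $N$ depends only on $|\K|$ and $\eps$. By pigeonhole, some center $c$ carries mass at least $1/N$ of the law of $\beta(H_m)$, and this law is independent of $b$. Then $\|B_m - c\|_1 \le 2\eps$ with probability at least $1/N$. Rescaling $\eps$ gives the Doeblin condition with $m_\eps \defas m_{\eps/4}$ and $\delta_\eps \defas 1/N(|\K|,\eps/2)$, where $N(|\K|,r)\le (\lceil |\K|/r\rceil+1)^{|\K|}$ is obtained from a grid. This map is explicit, and computable whenever $\eps\mapsto m_\eps$ is.

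\textbf{Undecidability.} Ergodic blind stochastic games form a subclass of Doeblin hidden stochastic games. By \cite[Theorem 3]{chatterjee2025ergodic}, the exact problem is undecidable for ergodic blind games, so it is undecidable for Doeblin games as well. The main obstacle is the uniformity in $b$ under randomized strategies, and it is resolved by the independence of the action law from $b$ together with the covering argument.
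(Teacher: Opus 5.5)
Your final covering argument is correct, but it does not follow the paper's route. In fact, the route you discarded is the paper's. Your claim that $\PP_{\sigma,\tau}(H_{m_\eps}=h^*)$ ``is not uniformly bounded below'' is mistaken. There are only $|\I\times\J|^{m_\eps-1}$ histories in $\H_{m_\eps}$ and their probabilities sum to one. So for every $(\sigma,\tau)$ some $h^*$ has probability at least $|\I\times\J|^{-(m_\eps-1)}$, a bound uniform in the strategies. The paper takes exactly this $h^*$, sets $\overline{b}^\top \defas b_1^\top T(h^*)$, applies ergodicity at $\eps/2$, and obtains $\delta_\eps=|\I\times\J|^{-m_\eps}$.

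You instead do the pigeonhole in belief space. The law of $\beta(H_m)$ does not depend on $b$, and $B_m$ always lies within $\eps/2$ of $\beta(H_m)$. Hence one ball of an $\eps/2$-cover of $\Delta(\K)$ carries mass at least $1/N$. The centres should lie in $\Delta(\K)$, e.g.\ a grid $\D_\eta$, since the definition requires $\overline{b}\in\Delta(\K)$. This costs a factor two in the contraction target ($\eps/4$ instead of $\eps/2$). In return, your $\delta_\eps$ depends only on $|\K|$ and $\eps$, not on $m_\eps$ or the action sets. The paper's $\delta_\eps$ decays exponentially in $m_\eps$, and the approximation algorithm takes $\omega_\eps$ of order $\log(1/\eps)/\delta_\eps^2$. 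So your bound is quantitatively much better, while the paper's argument is shorter.

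Two things need fixing.

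First, the opening plan with $\delta_\eps=1$ is false and contradicts your own conclusion. Take a blind game in which one action sends all mass to $k_1$ and another sends it to $k_2$. A fair coin at the last stage then yields two beliefs at distance $2$, each with probability $1/2$, whatever $b$ is.

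Second, the statement asks for an \emph{explicit} mapping. You only say it is computable whenever $\eps\mapsto m_\eps$ is, leaving $m_\eps$ as the existential integer of the ergodicity definition. The paper supplies it via Paz's result that every product of length $m_0=3^{|\K|}$ has $\tau_e<1$. With $\overline{\tau}(m_0)\defas\max_{h\in\H_{m_0}}\tau_e(T(h))<1$ and submultiplicativity, it takes $m_\eps=\lceil \ln(\eps/2)/\ln\overline{\tau}(m_0)\rceil\, m_0$. The same computation with $\eps/4$ in place of $\eps/2$ works for your version. Alternatively, $\max_{h\in\H_m}\tau_e(T(h))$ is computable for each $m$, so searching $m=1,2,\ldots$ for the first $m$ where it is at most $\eps/4$ terminates by ergodicity.

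The undecidability part matches the paper.
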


\noindent We refer to \Cref{Section: Discussion} for a discussion on extending the ergodicity condition to the hidden setting. 

%%%%%%%%%%%%%%%%%%%%%%%%%%%%%%%%%%%%%%%%%%%%%%%
%%%%%%%%%%%%%%%%%%%%%%%%%%%%%%%%%%%%%%%%%%%%%%%

\subsubsection{Primitivity}

We define the subclass of \emph{primitive hidden stochastic games}, previously introduced in the multiplayer setting by Chatterjee et al.~\cite{Chatterjee2026mon}.
This subclass leverages the following definition, previously introduced by Cohen~\cite{cohen1982sets}.

\paragraph{Primitive}
A set of nonnegative matrices $\{ P_a \}_{a \in \A}$ is primitive if there exists $m \in \NN^*$ such that, for every sequence $(a_1, a_2, \ldots, a_{m}) \in \A^m$, all coordinates of the matrix $P_{a_1} P_{a_2} \cdots P_{a_m}$ are strictly positive. 

\paragraph{Coefficient of Primitivity}
We introduce the coefficient of primitivity $\tau_p$, also called Birkhoff coefficient~\cite{seneta2006non}, to characterize the primitive property.
Consider a nonnegative matrix $P$. 
If the minimum entry of $P$ is zero, then define $\tau_p(P) \defas 1$.
Otherwise, the minimum entry of $P$ is strictly positive, so define 
\begin{align*}
    \tau_p(P)
        \defas \frac
            {1-[\psi(P)]^{1/2}}
            {1+[\psi(P)]^{1/2}},
\end{align*}
where 
\begin{align*}
    \psi(P) \defas \min_{ k, \overline{k} ,k' ,k'' \in \K } 
        \frac
        {P_{ k , k' } P_{ \overline{k} , k'' } }
        {P_{\overline{k},k'} P_{k,k''}}.
\end{align*}
Note that $0 \le \tau_p(P)\le 1$ and that Birkhoff's coefficient is sub-multiplicative, i.e., $\tau_p(P_1P_2) \le \tau_p(P_1)\tau_p(P_2)$ for any two column-allowable matrices $P_1$ and $P_2$; see~\cite[p. 83]{seneta2006non} for a proof.
Observe that primitivity is equivalent to $\lim_{m \to \infty} \max_{(a_1, \ldots, a_m) \in \A^m} \tau_p\left(P_{a_1} P_{a_2} \cdots P_{a_m}\right) = 0$.
Therefore, we define the class of \emph{primitive hidden stochastic games} as follows.

\begin{Definition}[Primitive hidden stochastic game]
\label{Definition: Primitive hidden stochastic game}
    A hidden stochastic game is primitive if,  for all $\eps>0$, there exists an integer $m_\eps\in \NN^*$ such that, for all $h_{m_\eps} \in \H_{m_\eps}$,
    \begin{equation*}
        \tau_p \left(T(h_{m_\eps})\right) \le \eps.
    \end{equation*}
\end{Definition}

\paragraph{Conditions}
Necessary and sufficient conditions for primitivity of a set of nonnegative matrices are provided in~\cite{cohen1982sets}. 
For example, a hidden stochastic game is primitive when every entry of every matrix is strictly positive.

\paragraph{Contribution}
We show that primitive hidden stochastic games satisfy the Doeblin condition.

\begin{Theorem}\label{Theorem: Primitive Hidden is Doeblin}
    Every primitive hidden stochastic game satisfies the Doeblin condition, with an explicit mapping $\eps \mapsto (m_\eps, \delta_\eps)$.
\end{Theorem}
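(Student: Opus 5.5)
The plan is to build the fixed belief $\overline{b}$ by a pigeonhole argument over a finite cover of $\Delta(\K)$, and then to transfer the resulting probability bound from one initial state to every initial belief. Two ingredients make this work. First, primitivity makes the posterior after a long history almost independent of the initial belief. Second, primitivity also makes the probabilities of any long history comparable across initial states.

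\emph{Step 1 (comparability of initial states).} Apply \Cref{Definition: Primitive hidden stochastic game} with some $\eps<1$ to obtain $m_0$ such that $\tau_p(T(h_{m_0}))<1$ for every $h_{m_0}\in\H_{m_0}$. By definition of $\tau_p$, this means every such $T(h_{m_0})$ has strictly positive entries. Since $\H_{m_0}$ is finite, we may set
\[
\mu\defas\min_{h\in\H_{m_0}}\min_{k,k'}T_{k,k'}(h)>0 .
\]
Now take any history $h$ of length $m\ge m_0$ and write $T(h)=T(h^1)T(h^2)$, where $h^1$ consists of the first $m_0-1$ triples. Since $T(h^1)$ has entries in $[\mu,1]$, for all states $k,k'$ we get
\[
e_k^\top T(h)\mathbf 1\ \ge\ \mu\,\mathbf 1^\top T(h^2)\mathbf 1\ \ge\ \mu\, e_{k'}^\top T(h)\mathbf 1 .
\]
The strategy factors in $\PP^{b}_{\sigma,\tau}(H_m=h)$ depend only on $h$, not on the initial belief. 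Hence $\PP^{\delta_k}_{\sigma,\tau}(H_m=h)\ge\mu\,\PP^{\delta_{k'}}_{\sigma,\tau}(H_m=h)$. Averaging over $k\sim b$ then gives $\PP^{b}_{\sigma,\tau}(H_m=h)\ge\mu\,\PP^{\delta_{k_0}}_{\sigma,\tau}(H_m=h)$ for every $b$ and a fixed reference state $k_0$.

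\emph{Step 2 (loss of memory of the prior).} The Birkhoff coefficient satisfies $\tau_p(T)=\tanh(\Delta(T)/4)$, where $\Delta(T)$ is the Hilbert projective diameter of the image $\{b^\top T: b\in\Delta(\K)\}$. So $\tau_p(T(h))\le\eps'$ forces every normalized image $b^\top T(h)/b^\top T(h)\mathbf 1$ into a set of small Hilbert diameter. On the simplex, a small Hilbert distance $d$ yields an $L_1$ distance at most of order $e^{d}-1$. I would choose $\eps'$ explicitly so that this $L_1$ diameter is at most $\eps/2$, and then set $m_\eps\defas\max(m_0,m_{\eps'})$. Consequently, for every $h\in\H_{m_\eps}$ with $T(h)>0$, the posterior $b_h^b$ lies within $\eps/2$ of $\beta(h)\defas b_h^{\delta_{k_0}}$, uniformly in $b$. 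Admissibility is not an issue, because positivity of $T(h)$ makes every such history admissible from every prior.

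\emph{Step 3 (pigeonhole and conclusion).} Cover $\Delta(\K)$ by $N=N(\eps,|\K|)$ closed $L_1$-balls $\C_{\eps/2}(c_1),\dots,\C_{\eps/2}(c_N)$; an explicit grid gives a computable $N$. Fix $(\sigma,\tau)$. Under $\PP^{\delta_{k_0}}_{\sigma,\tau}$, the random point $\beta(H_{m_\eps})$ lands in some ball $\C_{\eps/2}(c_i)$ with probability at least $1/N$; set $\overline b\defas c_i$. Let $A$ be the set of histories $h$ with $\beta(h)\in\C_{\eps/2}(c_i)$. By Step 1, $\PP^b_{\sigma,\tau}(H_{m_\eps}\in A)\ge\mu/N$. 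By Step 2, on $A$ we have $\|B_{m_\eps}-\overline b\|_1\le\eps$. This gives the Doeblin condition with $\delta_\eps\defas\mu/N$, and both $m_\eps$ and $\delta_\eps$ are explicit.

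The main obstacle I expect is Step 1. A naive comparison fails because the row sums of $T(h)$ can differ wildly between states, and a small $\tau_p$ controls only ratios of ratios. The fix is to factor off a fixed-length positive prefix with a uniform lower bound $\mu$ on its entries. A secondary technical point is making the conversion from $\tau_p$ to $L_1$ distance quantitative in Step 2.
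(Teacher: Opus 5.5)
Your proof is correct, and it takes a genuinely different route from the paper's. The paper neither compares history laws across priors nor covers the simplex. For fixed $(\sigma,\tau)$ it builds a single history $h^*_{m_\eps}$ greedily: at each stage it picks an action of each player with conditional probability at least $1/|\I|$, resp.\ $1/|\J|$, and uses a fixed signal $s^*$. This gives $\PP^b_{\sigma,\tau}(H_{m_\eps}=h^*_{m_\eps})\ge(|\I||\J|)^{-(m_\eps-1)}\mu_\eps$ for every $b$, where $\mu_\eps$ is the smallest row sum of $T(h)$ over $h\in\H_{m_\eps}$. It then takes $\overline b$ to be the posterior after $h^*_{m_\eps}$ and applies the loss-of-memory bound on that one event. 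This is shorter and makes $\overline b$ an actual posterior, but it pays exponentially in $m_\eps$. Since the row sums of $T(h)$ over all signal sequences add up to one, $\mu_\eps\le|\S|^{-(m_\eps-1)}$, so the paper's $\delta_\eps$ is at most $(|\I||\J||\S|)^{-(m_\eps-1)}$. That is a power of $\eps$ whose exponent is of order $m_0\ln(|\I||\J||\S|)/\ln(1/\overline{\tau}(m_0))$.

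Your Harnack-type inequality $\PP^b_{\sigma,\tau}(H_m=h)\ge\mu\,\PP^{\delta_{k_0}}_{\sigma,\tau}(H_m=h)$ uses a $\mu$ computed once at the fixed length $m_0$ and holds for all $m\ge m_0$. It lets you exploit the whole law of $H_{m_\eps}$, and pigeonhole then gives $\delta_\eps=\mu/N(\eps,|\K|)$. This is polynomial in $\eps$ of degree $|\K|-1$ with an $\eps$-independent constant, and it is insensitive to the numbers of actions and signals and to the contraction rate. Your Step 2 also makes the $\tau_p$-to-$L_1$ conversion self-contained. The sharp form is $\|u-v\|_1\le 2\tanh(d_H(u,v)/4)$, so the normalized image has $L_1$-diameter at most $2\tau_p(T)$, and $\eps'=\eps/4$ suffices.

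Two details need tidying. First, primitivity gives $\tau_p(T(h))\le\eps'$ only at length exactly $m_{\eps'}$, so taking $m_\eps=\max(m_0,m_{\eps'})$ needs a line of justification. You can apply the diameter bound to the last $m_{\eps'}-1$ factors, since the normalized image of the simplex under $T(h^1)T(h^2)$ lies in that of the nonnegative cone under $T(h^2)$; alternatively, use submultiplicativity, as the paper does. Second, to make $m_0$ and $m_{\eps'}$ explicit you need either Cohen's bound plus submultiplicativity, as in the paper, or a search over lengths, which terminates by primitivity.
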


% \todo[inline]{RAI: 
% - Reminder to add the decidability of checking primitivity for hidden stochastic games, only if we decide it to prove it in this paper.
% - Include the computational consequence as a corollary: decidability of approximation for ergodic blind stochastic games, and how it relates to the previous paper.}

%%%%%%%
%%%%%%%
%%%%%%%
%%%%%%%
%%%%%%%
%%%%%%%

\section{Proof of Theorem \ref{Theorem: Doeblin Hidden Stochastic Games}}
\label{Section: Proof of Doeblin Hidden Stochastic Games}

This section proves \Cref{Theorem: Doeblin Hidden Stochastic Games}, i.e., the uniform value exists in Doeblin hidden stochastic games and the approximation problem is decidable.\\

The proof of \Cref{Theorem: Doeblin Hidden Stochastic Games} proceeds as follows:
\begin{itemize}
    \item 
        First, we show that, for a fixed strategy pair, the difference between the expected average starting from two initial beliefs is bounded (\Cref{Lemma: continuity reward}).
    \item 
        Second, we introduce a stochastic game with finite-state and finite-action sets, called \emph{abstract stochastic game}.
    \item
        Third, using a coupling argument, we compare the $n$-stage objectives of the original Doeblin game and the abstract game.
    \item
        Fourth, we prove \Cref{Theorem: Doeblin Hidden Stochastic Games}, adapting the approach used in the proof of Theorem 1 in~\cite{chatterjee2025ergodic}.
\end{itemize}

\paragraph{Different Initial Beliefs} 
We show that the difference in the $n$-stage payoff under the same strategy pair but starting from different initial beliefs can be bounded.

\begin{Lemma}
\label{Lemma: continuity reward}
    Consider a hidden stochastic game $\Gamma$.
    We have that, for every $n \in \NN^*$, strategy pair $(\sigma,\tau)\in \Sigma\times \T$, and arbitrary initial belief $b_1,b_1'\in \Delta(\K)$, 
    \begin{equation}
        \left| \EE_{\sigma,\tau}^{b_1}\left( \dfrac{1}{n}\sum_{m=1}^{n} G_m \right) - \EE_{\sigma,\tau}^{b_1'}\left( \dfrac{1}{n}\sum_{m=1}^{n} G_m \right) \right| \le \left\|b_1-b_1'\right\|_1.
        \label{Equation: Continuity of Reward}
    \end{equation}
\end{Lemma}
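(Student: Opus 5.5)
The plan is to exploit linearity of the play distribution in the initial belief. Fix $n$ and a strategy pair $(\sigma,\tau)$. Since strategies depend only on the public history and not on $b_1$, the law of the play under $\PP^{b}_{\sigma,\tau}$ is obtained by first drawing $K_1\sim b$ and then running the game from the Dirac belief at $K_1$. Consequently,
\[
    \PP^{b}_{\sigma,\tau} = \sum_{k\in\K} b(k)\,\PP^{\delta_{\{k\}}}_{\sigma,\tau}.
\]
This holds because the probability of any finite cylinder $(k_1,i_1,j_1,s_2,k_2,\ldots)$ equals $b(k_1)$ times a product of strategy probabilities $\sigma(i_m\given h_m)\tau(j_m\given h_m)$ and transition probabilities $p(k_{m+1},s_{m+1}\given k_m,i_m,j_m)$, none of which involve $b$. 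I would verify this cylinder formula first; it is immediate from the construction of $\PP^{b}_{\sigma,\tau}$ via the Ionescu–Tulcea theorem.

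With this decomposition, define for each state $k$ the quantity $f(k)\defas \EE^{\delta_{\{k\}}}_{\sigma,\tau}\bigl(\frac1n\sum_{m=1}^n G_m\bigr)$. Since $g$ takes values in $[0,1]$, we have $f(k)\in[0,1]$. Linearity of expectation then gives $\EE^{b_1}_{\sigma,\tau}(\cdot)=\sum_k b_1(k) f(k)$, and similarly for $b_1'$. The difference is therefore $\sum_k (b_1(k)-b_1'(k)) f(k)$.

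Its absolute value is at most $\sum_k |b_1(k)-b_1'(k)|\,|f(k)| \le \|b_1-b_1'\|_1$, which is the claim. One can even sharpen this to $\frac12\|b_1-b_1'\|_1$ by subtracting a constant from $f$, because $\sum_k(b_1(k)-b_1'(k))=0$, but this is not needed.

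The only potentially delicate step is the decomposition over $K_1$. In particular, one might worry that the strategies are defined on histories that are admissible from $b_1$ but not from $\delta_{\{k\}}$. This causes no difficulty, because strategies are defined on all of $\bigcup_m\H_m$ regardless of admissibility. The remaining steps are routine.
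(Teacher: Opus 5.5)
Your proposal is correct and follows essentially the same route as the paper: write the expectation under $b_1$ as $\sum_{k} b_1(k)\,\EE^{k}_{\sigma,\tau}(\cdot)$, subtract, apply the triangle inequality, and use that each per-state expected average lies in $[0,1]$. Your cylinder-set justification of this decomposition, the admissibility remark, and the optional sharpening to $\tfrac12\|b_1-b_1'\|_1$ are all valid details the paper leaves implicit.
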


\begin{proof}[Proof of Lemma \ref{Lemma: continuity reward}]
    Consider a hidden stochastic game $\Gamma$.
    We have that, for every horizon $n \in \NN^*$, strategy pair $(\sigma,\tau)\in \Sigma\times \T$, and arbitrary pair of beliefs $b_1,b_1'\in \Delta(\K)$,
    \begin{align*}
        &\left| \EE_{\sigma,\tau}^{b_1}\left( \dfrac{1}{n}\sum_{m=1}^{n} G_m \right) - \EE_{\sigma,\tau}^{b_1'}\left( \dfrac{1}{n}\sum_{m=1}^{n} G_m \right) \right| \nonumber\\
        &\qquad=\left|\sum_{k\in \K}b_1(k)\EE_{\sigma,\tau}^{k}\left(\dfrac{1}{n}\sum_{m=1}^{n}G_m\right)-\sum_{k\in \K}b_1'(k)\EE_{\sigma,\tau}^{k}\left(\dfrac{1}{n}\sum_{m=1}^{n}G_m\right)\right|
        &\hspace*{-1cm}\text{(def. of expectation)}\\
        &\qquad=\left|\sum_{k\in \K}(b_1(k)-b_1'(k))\EE_{\sigma,\tau}^{k}\left(\dfrac{1}{n}\sum_{m=1}^{n}G_m\right)\right|
        &\hspace*{-1cm}\text{(rearranging)}\\
        &\qquad\le \sum_{k\in \K}\left|b_1(k)-b_1'(k)\right|\EE_{\sigma,\tau}^{k}\left(\dfrac{1}{n}\sum_{m=1}^{n}G_m\right)
        &\hspace*{-1cm}\text{(triangle inequality)}\\
        &\qquad\le \sum_{k\in \K}\left|b_1(k)-b_1'(k)\right|
        &\hspace*{-1cm}\text{$\left(0\le \EE_{\sigma,\tau}^{k}\left(\dfrac{1}{n}\sum_{m=1}^{n}G_m\right)\le 1\right)$}\\
        &\qquad= \left\|b_1-b_1'\right\|_1,
        &\hspace*{-1cm}\text{(def. of $\|\cdot\|_1$)}
    \end{align*}
    which concludes the proof.
\end{proof}

\paragraph{Discretization of Beliefs}
We introduce a stochastic game with finite-state and finite-action sets, called \emph{abstract stochastic game}.

Consider a Doeblin hidden stochastic game $\Gamma=(\K,\I,\J,\S,p,g)$ with initial belief $b_1 \in \Delta(\K)$ and parameters $\eps > 0$ and $m_\eps \in \NN^*$ given by \Cref{Definition: Doeblin Hidden Stochastic Game}.
For $\eta \in \NN^*$, define the $\eta$-uniform grid of beliefs $\D_\eta \subseteq \Delta(\K)$ by
\begin{align*}
    \D_\eta 
        \defas \left\{ b \in \Delta(\K)\,\middle|\, b(k) = \dfrac{n(k)}{\eta},\, n(k) \in \{0, \ldots, \eta\},\,\sum_{k = 1}^{|\K|} n(k) = \eta \right\}.
\end{align*}
Denote by $\pi_\eta\colon \Delta(\K)\to \D_\eta$ a projection function such that $\pi_\eta(b) \in \argmin_{b'\in \D_\eta}\left\|b-b'\right\|_1$.
By construction, for every $b\in \Delta(\K)$, there exists $\pi_\eta(b) \in \D_\eta$ such that
\[
    \left\|b - \pi_\eta(b) \right\|_1 \le \dfrac{|\K|^2}{\eta}
    \qquad \text{and} \qquad
    \supp(b) = \supp(\pi_\eta(b)) .
\]
In particular, for every $\eps > 0$, if $\eta \ge |\K|^2 \left \lceil \tfrac{1}{\eps} \right \rceil$, then $\left\|b-\pi_\eta(b)\right\|_1\le \eps$ for all $b \in \Delta(\K)$.

\paragraph{Abstract Stochastic Game} 
For a given history $h_m \in \H_m$ and a triple $(i, j, s) \in \I \times \J \times \S$, we denote the concatenation of $(i, j, s)$ to $h_m$ by $h_{m+1} = h_m \times (i, j, s)$.
The \emph{abstract stochastic game} of $\Gamma$ starting from $b_1$ with recall $\eta \in \NN^*$, denoted by $\Gamma_A(b_1,\eta)$, is defined by a $6$-tuple 
\[
    \Gamma_A(b_1,\eta) = \left(\X_\eta,\I,\J,\overline{p}_A,\overline{g}_A,x_1\right),
\]
where:
\begin{itemize}
    \item 
        $\X_\eta$ is the finite set of abstract states, defined by 
        \begin{equation*}
            \X_\eta\defas \{ (b_1, \emptyset) \} \cup \bigcup_{\substack{b \in \D_\eta \\s \in \S}} \{ (b, s) \} \cup \bigcup_{\substack{m \in [2 \until \eta]\\b \in \D_\eta\\}} \left\{ (b, h_m) \,\middle|\,  h_m \in \H_m(b) \right\}.
        \end{equation*}
        Define the function $\proj \colon \X_\eta \to \Delta(\K)$ that associates an abstract state with a belief by
        \begin{align*}
            \proj(x)(\cdot) 
                \defas \begin{cases}
                    b^{b}_{h_m}(\cdot) = \PP^{b}(K_m = \cdot \given H_m = h_m)
                        & x = (b, h_m) \\
                    b(\cdot)
                        & x = (b, s)
                \end{cases} . 
        \end{align*}  
    \item
        $\I$ and $\J$ are the finite sets of actions of Player $1$ and Player $2$, respectively; 
    \item 
        $\overline{p}_A \colon \X_\eta \times \I \times \J \to \Delta(\X_\eta)$ is the abstract transition function.
        For every $x, x' \in \X_\eta$, $(i, j) \in \I \times \J$, define the \emph{abstract transition function} $\overline{p}_A$ by
        \begin{align*}
            \overline{p}_A(x'\given x,i,j)
                \defas \sum_{s \in \S} \mathbb{1}_{\left\{x'=\psi_A(x,i,j,s)\right\}} \cdot \PP(S = s \given \proj(x), i, j) ,
        \end{align*}
        where $\PP(S = s \given \proj(x), i, j) \defas \left( \sum_{k, k' \in \K} \proj(x)(k) \, p(k, i, j)(k', s) \right)$ and $\psi_A \colon \X_\eta \times \I \times \J \times \S\to \X_\eta$ is an \emph{abstract belief update} where $\psi_A(x,i,j,s)$ is defined by
        \begin{align*}
            \left\{
                \begin{array}{ll}
                    \defas \left( b, h_m \times (i, j, s) \right)
                        & x = (b, h_m), m \in [1 \until \eta-1] \\
                    \defas \left( b, (i, j, s) \right)
                        & x = (b, s') \\
                    \in \left\{ (b', s) \,\middle|\, b' \in \argmin\limits_{\substack{\overline{b}\in  \D_\eta\\ \supp(\overline{b}) = \supp\left(b^{b}_{h_{m} \times (i,j,s)}\right)}} \left\|\overline{b} - b^{b}_{h_{m} \times (i,j,s)} \right\|_1 \right\}
                        & x = (b, h_m), m = \eta ;
                \end{array}
            \right.
        \end{align*}
    \item
        $\overline{g}_A \colon \X_\eta \times \I\times \J \to [0,1]$ is the abstract stage reward function. 
        For every state $x\in \X_\eta$ and action pair $(i,j) \in \I\times\J$, the \emph{abstract reward function} is defined by
        \begin{equation*}
            \overline{g}_A(x, i, j)
                \defas \sum_{k \in \K} \proj(x)(k) \, g(k,i,j);
        \end{equation*}
    \item 
        $x_1 = (b_1, \emptyset) \in \X_\eta$ is the initial abstract state.
\end{itemize}
To simplify notation, we may drop the dependence of the abstract game on $b_1$, $\eta$, or both, depending on the context.

\paragraph{Dynamic}
An abstract stochastic game $\Gamma_{A}(b_1, \eta)$ evolves as follows. 
The initial state is $x_1 = (b_1, \emptyset)$. 
The players know $x_1$ and observe the state throughout the game.
At each stage $m \ge 1$, 
\begin{itemize}
    \item 
        Player $1$ and Player $2$ simultaneously select actions $I_m$ and $J_m$, respectively, which are observed by both players;
    \item 
        A stage reward $\overline{G}^A_m\defas \overline{g}_A(X_m, I_m, J_m)$ is generated and the players can compute it;
    \item 
        Subsequently, the successor state $X_{m+1}$ is drawn according to $\overline{p}_A( \,\cdot \given X_m, I_m, J_m)$.
\end{itemize}
In particular, after every block of $\eta$ stages, the state is of the form $x = (b, s)$, for some $b \in \D_\eta$ and $s \in \S$.

\paragraph{History}
Given an abstract stochastic game $\Gamma_{A}(b_1, \eta)$, a \emph{history} before stage $m$ is a sequence $(x_1, i_1,j_1,x_2,\ldots, i_{m-1},j_{m-1},x_m)$.
Denote the set of histories before stage $m$ by $\H_m^{A(\eta)} \defas \{ x_1 \} \times (\I \times \J \times \X_\eta)^{m-1}$, with $\H_1^{A(\eta)} \defas \{ x_1 \}$.

\paragraph{Strategies}
A \emph{(history-dependent) strategy} in $\Gamma_A$ for Player $1$ is a mapping $\sigma_A \colon \bigcup_{m\ge 1} \H_m^A \to \Delta(\I)$.
Similarly, a strategy for Player $2$ in $\Gamma_A$ is a mapping $\tau_A\colon \bigcup_{m\ge 1}\H_m^A\to \Delta(\J)$.
We denote the set of strategies for Player $1$ and Player $2$ in $\Gamma_A$ by $\Sigma_A$ and $\T_A$, respectively. 

\paragraph{Admissible History}
Define the set of \emph{admissible histories} before stage $m$ by
\[
    \H_m^{A(\eta)}(x_1) 
        \defas \left\{ h_m^A \in \H_m^{A(\eta)} \,\middle|\, \exists (\sigma_A, \tau_A) \in \Sigma_A \times \T_A \quad \PP^{x_1}_{\sigma_A, \tau_A} (H_m^A = h_m^A) > 0 \right\}.
\]

\paragraph{Payoff and Value}
The $n$-stage payoff of the abstract stochastic game $\Gamma_{A}$ given by strategy pair $(\sigma_A, \tau_A) \in \Sigma_A \times \T_A$ is defined by
\begin{equation*}
    \gamma_{n}^{A}(x_1,\sigma_A,\tau_A) 
        \defas \EE_{\sigma_A,\tau_A}^{x_1}\left(\dfrac{1}{n}\sum_{m=1}^{n} \overline{G}^A_m\right),
\end{equation*} 
and the $n$-stage value is defined by
\begin{equation*}
    v_{n}^{A}(x_1)
        \defas \max_{\sigma_A\in \Sigma_A} \min_{\tau_A\in \T_A} \gamma_{n}^{A}(x_1,\sigma_A,\tau_A)
        = \min_{\tau_A\in \T_A} \max_{\sigma_A\in \Sigma_A} \gamma_{n}^{A}(x_1,\sigma_A,\tau_A).
\end{equation*}
By~\cite{mertens1981stochastic}, the uniform value exists and is denoted by $v_{A}(x_1)$.

\paragraph{History Mapping}
Given a Doeblin hidden stochastic game $\Gamma$ with initial belief $b_1 \in \Delta(\K)$ and $\eta \in \NN^*$, consider the corresponding abstract stochastic game $\Gamma_A(b_1, \eta)$.
We will construct a coupling between $\Gamma$ and $\Gamma_A$ and therefore need to translate histories from one game to the other.
To do so, we define mappings $\xi_A \colon \bigcup_{m\ge 1} \H_m^A(x_1) \to \bigcup_{m\ge 1} \H_m$ and $\xi \colon \bigcup_{m\ge 1} \H_m(b_1) \to \bigcup_{m\ge 1} \H^A_m$ by recursion as follows.\\

\noindent From $\Gamma_A$ to $\Gamma$:
\begin{itemize}
    \item 
        Base case ($m = 1$):
        Let $h_1^A = x_1$ be the history at stage $m = 1$ in $\Gamma_A$.
        We define $\xi_A(h_1^A) \defas \emptyset$, the only history at stage $m=1$ in $\Gamma$.
    \item 
        Recursion:
        Let $h_{m+1}^A = h_m^A \times (i_m, j_m, x_{m+1}) = (x_1, \ldots, x_m, i_m, j_m, x_{m+1})$ be a history in $\Gamma_A$.
        We define $\xi_A(h_{m+1}^A) \defas \xi_A(h_{m}^A) \times (i_m, j_m, s_{m+1})$, where $x_{m+1} = \psi_A(x_m,i_m,j_m,s_{m+1})$.
\end{itemize}
From $\Gamma$ to $\Gamma_A$:
\begin{itemize}
    \item 
        Base case ($m=1$):
        Let $h_1 = \emptyset$ be the history at stage $m = 1$ in $\Gamma$.
        We define $\xi(h_1) \defas x_1$, the only history at stage $m=1$ in $\Gamma_A$.
    \item 
        Recursion:
        Let $h_{m+1} = h_m \times (i_m, j_m, s_{m+1})$ be a history in $\Gamma$.
        We define $\xi(h_{m+1}) \defas \xi(h_{m}) \times (i_m, j_m, x_{m+1})$, where $x_{m+1}=\psi_A(x_m,i_m,j_m,s_{m+1})$, and $\xi(h_{m}) = (x_1, \ldots, x_m)$.
\end{itemize}
These definitions rely on the abstract belief update $\psi_A$, which requires certain admissibility of the input.
In particular, the definition of $\xi$ needs to be justified.
The required admissibility is provided by the following result.

\begin{Lemma}
\label{Result: Mapping admissibility}
    Consider a Doeblin hidden stochastic game $\Gamma$ with initial belief $b_1 \in \Delta(\K)$ and $\eta \in \NN^*$.
    Then, every admissible history in $\Gamma_A(b_1, \eta)$ and in $\Gamma$ are mapped to each other through $\xi_A$ and $\xi$.
    Formally, we have that,
    \[
        \xi_A \left( \bigcup_{m\ge 1} \H_m^A(x_1) \right) = \bigcup_{m\ge 1} \H_m(b_1)
        \qquad \text{and} \qquad
        \xi \left( \bigcup_{m\ge 1} \H_m(b_1) \right) = \bigcup_{m\ge 1} \H_m^A(x_1) .
    \]
\end{Lemma}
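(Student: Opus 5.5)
We argue by induction on the stage $m$, proving simultaneously two claims: (a) for every $h_m \in \H_m(b_1)$, the history $\xi(h_m)$ is well defined and belongs to $\H_m^A(x_1)$; and (b) for every $h_m^A \in \H_m^A(x_1)$, the history $\xi_A(h_m^A)$ is well defined and belongs to $\H_m(b_1)$. Along the induction we also carry an invariant relating the two games. Writing $x_m$ for the last abstract state of $\xi(h_m)$, the invariant states that $\PP(S = s \given \proj(x_m), i, j) > 0$ if and only if the one-step extension $h_m \times (i,j,s)$ is admissible in $\Gamma$. Since $\xi$ and $\xi_A$ act by appending, and $\xi_A\circ\xi$ and $\xi\circ\xi_A$ are the identity on admissible histories by construction, (a) and (b) together give the two set equalities.

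\textbf{Key step: the support invariant.} We show that $\supp(\proj(x_m)) = \supp(b^{b_1}_{h_m})$, even though $\proj(x_m)$ need not equal the true belief. The point is that $\PP(S = s \given b, i, j) > 0$ depends only on $\supp(b)$. The positivity of $\sum_{k,k'} b(k)\, p(k,i,j)(k',s)$ is decided by which $k$ have $b(k)>0$. Likewise, $\supp(\psi(b,i,j,s))$ depends only on $\supp(b)$ and $(i,j,s)$.

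We check the invariant through the three cases of $\psi_A$.
\begin{itemize}
\item Inside a block, $x=(b,h_m)$ with $m<\eta$. Then $\proj$ of the successor is the exact Bayesian update of $\proj(x)$, so supports propagate exactly as in $\Gamma$.
\item After a reset state $(b,s')$, the next state is $(b,(i,j,s))$ with projection $b^{b}_{(i,j,s)}$. This is again the exact update from $b$, and $b$ has the right support by induction.
\item At the end of a block, $m=\eta$. The argmin in $\psi_A$ is taken over grid points whose support equals $\supp(b^b_{h_\eta\times(i,j,s)})$. This set is nonempty because $\pi_\eta$ preserves supports, as noted after the definition of $\D_\eta$, so the reset preserves the support.
\end{itemize}
For the base case, $\proj(x_1)=b_1$ when $x_1=(b_1,\emptyset)$; we read $\proj$ of the initial state as $b_1$.

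\textbf{Admissibility and well-definedness.} A history is admissible in $\Gamma$ exactly when each appended signal has positive conditional probability given the current belief. The action choices are unconstrained, since we may take strategies with full support. Hence admissibility in $\Gamma$ is equivalent to the signal-positivity condition along the sequence of supports of $b^{b_1}_{h_m}$. In $\Gamma_A$, a transition $x\to x'$ has positive probability iff some $s$ with $\PP(S=s\given \proj(x),i,j)>0$ satisfies $x'=\psi_A(x,i,j,s)$. By the support invariant this is the same condition. Two further points need care.
\begin{itemize}
\item The states produced must lie in $\X_\eta$. This requires $h_m\in\H_m(b)$ for the within-block component, which follows because positivity from $\proj$ of the reset state matches admissibility from $b$.
\item The signal $s$ must be recoverable from $x_{m+1}$, so that $\xi_A$ is well defined. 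This holds because every successor state records the last triple $(i,j,s)$ or the signal $s$.
\end{itemize}

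\textbf{Main obstacle.} The main obstacle is the block-boundary step, together with the fact that $\psi_A$ at $m=\eta$ involves a selection from an argmin. We fix the selection once and for all so that $\psi_A$ is a function. We then rely only on the support equality guaranteed by the constraint in the argmin, and never on closeness of beliefs.
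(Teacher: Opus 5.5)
Your proposal is correct and follows the paper's route: induction on $m$ carrying the invariant $\supp(\proj(x_m)) = \supp(b^{b_1}_{h_m})$, together with the fact that positivity of $\PP(S=s\given b,i,j)$ depends only on $\supp(b)$. You are more explicit than the paper in three places: you check the three cases of $\psi_A$, you verify that within-block states lie in $\X_\eta$, and you use $\xi_A\circ\xi=\mathrm{id}$ and $\xi\circ\xi_A=\mathrm{id}$ to obtain the set equalities rather than only the inclusions.

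One small correction concerns the block-boundary step. Nonemptiness of the support-constrained argmin does not follow from the unconstrained nearest point $\pi_\eta(b)$, since that point can set small coordinates to zero. It follows instead from the fact that $\D_\eta$ contains a point with any prescribed support of size at most $\eta$, which holds once $\eta \ge |\K|$.
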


\begin{proof}[Proof of \Cref{Result: Mapping admissibility}]
    Given a Doeblin hidden stochastic game $\Gamma$ with initial belief $b_1 \in \Delta(\K)$ and $\eta \in \NN^*$, consider the corresponding abstract stochastic game $\Gamma_A(b_1,\eta)$.
    The proof follows by induction on $m\in \NN^*$.
    The base case is given by definition.
    
    For the inductive case of $\xi_A$, consider $h_{m+1}^A = h_m^A \times (i_m, j_m, x_{m+1}) = (x_1, i_1,j_1,\ldots, x_m, i_m, j_m,\linebreak x_{m+1})$ an admissible history in $\Gamma_A$.
    In particular, $h_m^A$ is admissible in $\Gamma_A$.
    By inductive hypothesis, $\xi_A(h_{m}^A) \in \H_m(b_1)$.
    In particular, the belief $b^{b_1}_{\xi_A(h_{m}^A)}$ is well-defined.
    We have to prove that $\xi_A(h_{m}^A) \times (i_m, j_m, s_{m+1}) \in \H_{m+1}(b_1)$, where $s_{m+1}$ is given by $x_{m+1} = \psi_A(x_m, i_m, j_m, s_{m+1})$.
    In other words, we have to prove that $\PP\left(S_{m+1} = s_{m+1} \given b^{b_1}_{\xi_A(h_{m}^A)}, i_m, j_m\right) > 0$.
    Because $h_{m+1}^A \in \H_{m+1}^A(x_1)$, we have that $\overline{p}_A(x_{m+1} \given x_m, i_m, j_m) > 0$.
    Therefore, $\PP(S = s_{m+1} \given \proj(x_m), i_m, j_m) > 0$.
    We conclude since $\supp(\proj(x_m)) = \supp\left(b^{b_1}_{\xi_A(h_{m}^A)}\right)$, which is also given by induction, the definition of $\proj$ and $\psi_A$.

    For the inductive case of $\xi$, consider $h_{m+1} = h_m \times (i_m, j_m, s_{m+1})$ an admissible history in $\Gamma$.
    In particular, $h_m$ is admissible in $\Gamma$.
    By inductive hypothesis, $\xi(h_{m}) = (x_1, \ldots, x_m) \in \H_m^A(x_1)$.
    We have to prove that $\xi(h_{m+1}) = \xi(h_{m}) \times (i_m, j_m, x_{m+1}) \in \H^A_{m+1}(x_1)$, where $x_{m+1}=\psi_A(x_m,i_m,j_m,s_{m+1})$.
    In other words, we have to prove that $\PP(S_{m+1} = s_{m+1} \given \proj(x_m), i_m, j_m) > 0$.
    Because $h_{m+1} \in \H_{m+1}(b_1)$, we have that $\PP\left( S = s_{m+1} \given b^{b_1}_{h_m}, i_m, j_m\right) > 0$.
    We conclude since $\supp\left(b^{b_1}_{h_m}\right) = \supp\left(\proj(x_m)\right)$, which is also given by induction, the definition of $\proj$ and $\psi_A$.
\end{proof}

\paragraph{Relationship between Payoffs} 
We translate strategies between $\Sigma_{A(\eta)}$ and $\Sigma$, in both directions, preserving approximately the same $n$-stage payoff as follows.

\begin{Lemma}
\label{Result: n-stage values are close}
    Consider a Doeblin hidden stochastic game $\Gamma$, an initial belief $b_1 \in \Delta(\K)$, and a parameter $\eps > 0$.
    For every $\eta \in \NN^*$ sufficiently large, the following symmetric properties hold
    \begin{itemize}
        \item 
            For every $\sigma_A \in \Sigma_A$, 
            there exists $\sigma \in \Sigma$ such that, 
            for every $\tau \in \T$, 
            there exists $\tau_A \in \T_A$ such that, 
            for every horizon $n \in \NN^*$,
            \begin{equation}
                \left|\gamma_n(b_1,\sigma,\tau)-\gamma_{n}^{A(\eta)}(x_1,\sigma_A,\tau_A)\right| 
                    \le \eps.
                    \label{Equation: Coupling Player 1 abstract to real} 
            \end{equation}
        \item 
            For every $\tau_A \in \T_A$, 
            there exists $\tau \in \T$ such that, 
            for every $\sigma \in \Sigma$, 
            there exists $\sigma_A \in \Sigma_A$ such that, 
            for every horizon $n \in \NN^*$,
            \begin{equation}
                \left|\gamma_n(b_1,\sigma,\tau)-\gamma_{n}^{A(\eta)}(x_1,\sigma_A,\tau_A)\right| 
                    \le \eps.
                    \label{Equation: Coupling Player 2 abstract to real}
            \end{equation}
        \item 
            For every $\sigma \in \Sigma$, 
            there exists $\sigma_A \in \Sigma_A$ such that, 
            for every $\tau_A \in \T_A$, 
            there exists $\tau \in \T$ such that, 
            for every horizon $n \in \NN^*$,
            \begin{equation}
                \left|\gamma_n(b_1,\sigma,\tau)-\gamma_{n}^{A(\eta)}(x_1,\sigma_A,\tau_A)\right| 
                    \le \eps.
                    \label{Equation: Coupling Player 1 real to abstract} 
            \end{equation}
        \item 
            For every $\tau \in \T$, 
            there exists $\tau_A \in \T_A$ such that, 
            for every $\sigma_A \in \Sigma_A$, 
            there exists $\sigma \in \Sigma$ such that, 
            for every horizon $n \in \NN^*$,
            \begin{equation}
                \left|\gamma_n(b_1,\sigma,\tau)-\gamma_{n}^{A(\eta)}(x_1,\sigma_A,\tau_A)\right| 
                    \le \eps.
                    \label{Equation: Coupling Player 2 real to abstract}
            \end{equation}
    \end{itemize}
\end{Lemma}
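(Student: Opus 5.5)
We prove the first item; the other three follow by the same construction with the roles of the players or the direction of translation exchanged.

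\textbf{Translating strategies.} Given $\sigma_A \in \Sigma_A$, define $\sigma \in \Sigma$ by $\sigma(h_m) \defas \sigma_A(\xi(h_m))$. This is well defined on admissible histories by \Cref{Result: Mapping admissibility}. Given $\tau \in \T$, define $\tau_A(h_m^A) \defas \tau(\xi_A(h_m^A))$. Both games then see the same action/signal sequences and play the same mixed actions. The difference between the two games lies only in the signal laws. In $\Gamma$, signals are drawn from the true belief $b^{b_1}_{h_m}$. In $\Gamma_A$, they are drawn from $\proj(x_m)$, which equals the belief obtained by restarting from the grid point $b$ at the start of the current block of length $\eta$.

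\textbf{Coupling by blocks.} Split time into blocks of length $\eta$. At the start of each block, the abstract game resets its belief to a grid point within $|\K|^2/\eta$ of its own posterior, and thereafter updates exactly by Bayes. The real game keeps its exact posterior. Inside a block, both are Bayes updates of the same history from two different initial beliefs, say $B$ (real) and $b$ (abstract grid). The conditional law of the rest of the block, and the expected stage rewards, differ by at most $\|B-b\|_1$, by the argument of \Cref{Lemma: continuity reward} applied to the shifted strategies $\sigma[h], \tau[h]$. Hence the only issue is controlling $\|B-b\|_1$ at block starts, since the two belief processes may drift apart across blocks.

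\textbf{Using the Doeblin condition.} Fix $\eps'$ with associated $m_{\eps'}$ and $\delta_{\eps'}$. Apply the Doeblin condition to the shifted strategy pair on the first part of each block, of length $L\, m_{\eps'}$ subdivided into $L$ sub-blocks. Since the target $\overline b$ depends only on the strategies and not on the initial belief, both the real belief (started at $B$) and the abstract belief (started at $b$), driven by the same strategy pair, land within $\eps'$ of $\overline b$ with probability at least $\delta_{\eps'}$ on each sub-block. By a maximal coupling of the signal laws, on each sub-block the two processes can be made to coincide, or get within $2\eps'$, with probability bounded below by a positive constant depending on $\delta_{\eps'}$. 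After $L$ sub-blocks, the failure probability is at most $(1-c\,\delta_{\eps'})^{L}$. On success, the beliefs stay $2\eps'$-close for the remaining $\eta - L m_{\eps'}$ stages, since with identical signals the Bayes updates do not expand the $L_1$ distance in expectation of the payoff, and \Cref{Lemma: continuity reward} gives a reward gap of at most $2\eps'$ per stage. Per block, the expected average reward gap is then at most
\[
\frac{L m_{\eps'}}{\eta} + (1-c\delta_{\eps'})^L + 2\eps' + \frac{|\K|^2}{\eta}.
\]
Choose $\eps'$, then $L$, then $\eta$ large so that this is below $\eps/2$. Averaging over the blocks, plus one incomplete last block of weight at most $\eta/n$, gives the bound for large $n$. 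For small $n$, note that the first block starts from the same belief $b_1$ in both games, so the two games agree exactly for $n \le \eta$. Together these give a bound uniform in $n$.

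\textbf{Main obstacle.} The main difficulty is the coupling step. The Doeblin condition only says that each process individually hits $\C_{\eps'}(\overline b)$ with probability at least $\delta$. We need a \emph{joint} construction in which both hit simultaneously, while the strategies are common and history-dependent. I would first try conditioning on the common history and using that both marginals put mass at least $\delta$ near $\overline b$. If this does not directly give simultaneous closeness, I would define the coupling state by state: keep the two chains' histories possibly different, with the abstract player using the translated strategy of the real history, and accept a $\delta^2$-type lower bound via independent sampling. The other three items are symmetric, with $\xi$ and $\xi_A$ swapped.
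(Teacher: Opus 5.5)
Your block and sub-block architecture and your error budget match the paper's. However, there is a genuine gap exactly where you locate the main obstacle, and it is not a technicality: with your fixed translation $\sigma=\sigma_A\circ\xi$, $\tau_A=\tau\circ\xi_A$, the bound uniform in $n$ is false. Here is a counterexample. Take states $k_1,k_2$, let the signal reveal the state being left, and let the next state be drawn from $b^*=(1/3,2/3)$ whatever happens. Then $B_2=b^*$ almost surely, so the game is Doeblin with $m_\eps=2$, $\delta_\eps=1$. Player~1 has actions $a,a'$ with rewards $1$ and $0$; Player~2 is trivial. For $3\nmid\eta$, in $\Gamma_A(b_1,\eta)$ the signal emitted at the first stage of each block $\ell\ge 1$ has law $\pi_\eta(b^*)\neq b^*$, while every other signal has law $b^*$ in both games. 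Let $\sigma_A$ play $a$ while the empirical frequency of the signal revealing $k_1$, among these block-initial signals, is closer to $1/3$ than to $\pi_\eta(b^*)(k_1)$, and $a'$ otherwise. By the law of large numbers, $\sigma_A\circ\xi$ ends up playing $a$ forever in $\Gamma$, while $\sigma_A$ ends up playing $a'$ forever in $\Gamma_A$, so the payoff gap tends to $1$. The Doeblin condition makes \emph{beliefs} forget the past, but a history-dependent strategy can accumulate the $O(1/\eta)$ discrepancies of all blocks.

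At the level of your individual steps, the argument breaks in three places. A maximal coupling of signals keeps histories equal but gives no joint lower bound on the two Doeblin events. Independent sampling gives $\delta_{\eps'}^2$, but at different histories $h\neq h'$, where your continuation pairs $(\sigma[h],\tau[h])$ and $(\sigma[h'],\tau[h'])$ differ; so neither a common target $\overline b$ for the next sub-block nor \Cref{Lemma: continuity reward} is available. Finally, the claim that beliefs then stay $2\eps'$-close is unfounded: Bayes updates along a common history can expand $L_1$ distances, and a maximal coupling cannot keep the signals equal over the $\eta-Lm_{\eps'}$ remaining stages. Separately, for $\eta<n<\eta/\eps$ neither of your two regimes applies; there you need the block estimate for truncated blocks, which is available because \Cref{Lemma: continuity reward} holds for every horizon.

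The missing idea, which is the heart of the paper's proof, is to abandon the fixed translation and let each game's strategy depend on the other game's coupled history. At every sub-block start $m_{\ell,r}$, the paper makes \emph{both} games play the same continuation pair $\bigl(\sigma_A[h^A_{m_{\ell,r}}],\tau[h_{m_{\ell,r}}]\bigr)$, each on its own continuation history and independently. It does so for a single sub-block as long as $\|B_{m_{\ell,r}}-\proj(X_{m_{\ell,r}})\|_1>2\eps$; the Doeblin condition for this single pair then yields a common $\overline b$ and joint success probability at least $\delta_\eps^2$. From the first sub-block start $T_\ell$ at which the distance is at most $2\eps$, it does so for the whole rest of the block. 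Within a block the abstract dynamics is exact Bayes from its grid point, so \Cref{Lemma: continuity reward}, applied to this one pair from the two beliefs, bounds the rest of the block by $2\eps$. No closeness of beliefs along paths is needed. The construction restarts in every block, and $(1-\delta_\eps^2)^\omega\le\eps$ gives the $3\eps$ per-block estimate. Be aware of one consequence: the resulting $\sigma$ comes from privately simulating the abstract play, including the abstract Player~2's moves drawn from $\tau$. It is therefore not clear that it respects the lemma's order of quantifiers ($\sigma$ chosen before $\tau$), and the paper does not address this. Your $\sigma_A\circ\xi$ is independent of $\tau$, but as shown above it cannot work.
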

% \comment[id=R]{We use all four. For the existence of the uniform value, we translate from $\Gamma_A$ to $\Gamma$. For the approximation of the uniform value, we translate strategies the other way around. Let me know if you find a better way to write this.}

\begin{proof}[Proof of \Cref{Result: n-stage values are close}]
    The proofs of \eqref{Equation: Coupling Player 1 abstract to real}, \eqref{Equation: Coupling Player 2 abstract to real}, \eqref{Equation: Coupling Player 1 real to abstract}, \eqref{Equation: Coupling Player 2 real to abstract} are completely symmetric, so we only give the explicit proof of \eqref{Equation: Coupling Player 1 abstract to real}.   
    
    Consider a Doeblin hidden stochastic game $\Gamma$, an initial belief $b_1\in\Delta(\K)$ and $\eps > 0$.
    Denote the parameters given by the Doeblin condition at $\eps$ by $m_\eps$ and $\delta_\eps$.
    Choose $\omega \in \NN^*$ large enough such that 
    \[
        \left(1 - \delta_\eps^2\right)^\omega \le \eps
    \]
    and $\omega \, m_\eps \ge |\K|^2$. 
    Consider $\eta \defas \omega \, m_\eps \left\lceil \tfrac{1}{\eps}\right\rceil$ and the abstract stochastic game $\Gamma_{A(\eta)}$.
    
    We divide time into blocks of length $\eta$.
    For each $\ell \ge 0$, the $\ell$-th block consists of the stages between $\ell \, \eta + 1$ and $(\ell+1) \, \eta$. 
    Each block is further subdivided into sub-blocks of length $m_\eps$.
    For each $r \in [0 \until \omega \lceil 1/\eps \rceil - 1]$, the $(\ell, r)$-th sub-block consists of stages between $\ell \, \eta + r\, m_{\eps} + 1$ and $\ell \, \eta + (r+1) \, m_{\eps}$.
    Denote the first stage of $(\ell, r)$ sub-block by $m_{\ell, r} \defas \ell \, \eta + r \, m_\eps + 1$.

    Intuitively, the coupling proceeds by blocks and sub-blocks as follows.
    First, if the beliefs in the two games are epsilon close at the start of a sub-block, then the same strategy pair, i.e., function of histories, is played in both games for the remainder of the block.
    By \Cref{Lemma: continuity reward}, the average payoff on the rest of the block will be epsilon close in both games.
    Second, if the beliefs differ by more than epsilon at the start of a sub-block, then the same strategy pair is played within the sub-block in both games.
    By the Doeblin condition, at the end of the sub-block the beliefs are close with constant probability.
    Because there are many sub-blocks, there is a high probability that the beliefs become close early in a block, and therefore having close payoffs in each block.

    Fix a strategy $\sigma_A \in \Sigma_A$ in $\Gamma_A$. 
    We use the coupling to construct a strategy $\sigma \in \Sigma$ in $\Gamma$.
    Similarly, fix a strategy $\tau \in \T$ in $\Gamma$.
    We use the coupling to construct a strategy $\tau_A \in \T_A$ in $\Gamma_A$.
    The coupling is defined by a probability measure $\nu$ on the product history space $\left(\bigcup_{m\ge 1} \H_m(b_1) \right)\times \left(\bigcup_{m\ge 1} \H_m^A(x_1) \right)$ such that the first marginal of $\nu$ is the law of the process in $\Gamma(b_1)$ induced by $(\sigma, \tau) \in \Sigma \times \T$, and the second marginal of $\nu$ is the law of the process in $\Gamma_A(x_1, \eta)$ induced by $(\sigma_A, \tau_A) \in \Sigma_A \times \T_A$. 
    We write $\EE_\nu$ and $\PP_\nu$ for the respective expectation and probability measures.

    The coupling is constructed sequentially on $\ell$.
    The case $\ell = 0$ is simple and presents the general idea of the construction.
    Recall the mappings $\xi$ and $\xi_A$ constructed earlier.
    We define the strategies $\sigma \in \Sigma$ and $\tau_A \in \T_A$ up to histories of length $\eta$ as follows.
    Inductively on $m \in [1 \until \eta]$, take $\left(h_{m},h_{m}^A\right) \in \supp \left( \PP_\nu\left(\left(H_{m},H_{m}^A\right) = \cdot \right) \right)$, and define
    \[
        \sigma(h_m) \defas \sigma_A\left(\xi(h_m)\right)
        \qquad 
        \tau_A(h_m^A) \defas \tau\left(\xi_A\left(h_m^A\right)\right) ,
    \]
    and extend $\sigma$ and $\tau_A$ to histories outside the support according to $\nu$ arbitrarily.
    The coupling measure $\nu$ that satisfies this definition is given by the following dynamic. 
    At stage $m$, 
    draw $i_m$ from $\sigma_A(\xi(h_m))$, $j_m$ from $\tau\left(\xi_A(h_m^A)\right)$, and $s_{m+1}$ from $\PP(\cdot\given b_m,i_m,j_m) = \PP(\cdot\given \proj(x_m),i_m,j_m)$.
    Then,
    \begin{itemize}
        \item 
            In $\Gamma$, 
            Player $1$ selects $i$ according to $\sigma_A(\xi(h_m))$ and 
            Player $2$ selects $j$ according to $\tau(h_m)$.
            Then, we draw $s$ according to $\PP(\cdot \given b_{m}, i, j)$, and 
            extend the history by $(i, j, s)$.
        \item 
            In $\Gamma_A$,
            Player $1$ selects $i'$ according to $\sigma_A\left(h^A_m\right)$ and
            Player $2$ selects $j'$ according to $\tau\left(\xi_A\left(h^A_m\right)\right)$.
            Then, we draw $s'$ according to $\PP(\cdot\given x_m, i', j')$, set $x' \defas \psi_A(x_{m}, i', j', s')$, and extend the history by $(i', j', x')$.
    \end{itemize}

    \noindent Because both games evolve according to $\sigma_A$ and $\tau$ during the first $\eta$ stages, by \Cref{Lemma: continuity reward}, independent of how the strategies are extended to future stages,
    \[
        \left|\EE^{b_1}_{(\sigma,\tau)} \left( \dfrac{1}{\eta}\sum_{m=1}^{\eta} \overline{G}_m \right) - \EE^{x_1}_{(\sigma_A,\tau_A)} \left( \dfrac{1}{\eta}\sum_{m=1}^{\eta} \overline{G}_m^A \right) \right|
            = \left|\EE_\nu\left(\dfrac{1}{\eta}\sum_{m=1}^{\eta} \overline{G}_m - \overline{G}_m^A \right)\right|
            = 0 .
    \]

    The general case for $\ell$ is more involved and requires using sub-blocks. 
    We proceed with the construction sequentially on $r$, the $r$-th sub-block within the $\ell$-th block.
    Consider a realized history $\left(h_{m_{\ell,r}},h_{m_{\ell,r}}^A\right) \in \supp \left( \PP_\nu\left(\left(H_{m_{\ell,r}},H_{m_{\ell,r}}^A\right) = \cdot \right) \right)$.
    Note that, because different signals might have been obtained in the different games, in general, $\xi(h_{m_{\ell,r}}) \not = h_{m_{\ell,r}}^A$ and $\xi_A(h_{m_{\ell,r}}^A) \not = h_{m_{\ell,r}}$. 
    Denote the realized belief at stage $m_{\ell,r}$ in $\Gamma$ by $b_{m_{\ell,r}}\coloneq b^{b_1}_{h_{m_{\ell,r}}}$ and the current state in $\Gamma_A$ by $x_{m_{\ell,r}}$.
    We distinguish two cases:
    
    \begin{itemize}
        \item 
            Case 1: 
            if $\left\|b_{m_{\ell,r}}-\proj(x_{m_{\ell,r}})\right\|_1\le 2 \eps$, then, for the rest of the block we proceed as follows.
            For every $m \in [1 \until \eta - r m_\eps]$ and admissible continuation $h_m$ and $h_m^A$,
            \begin{itemize}
                \item 
                    In $\Gamma$, 
                    Player $1$ selects $i$ according to $\sigma_A\left[h^A_{m_{\ell,r}}\right](\xi(h_m))$ and 
                    Player $2$ selects $j$ according to $\tau\left[h_{m_{\ell,r}}\right](h_m)$.
                    Then, we draw $s$ according to $\PP(\cdot \given b_{m}, i, j)$, and 
                    extend the history by $(i, j, s)$.
                    Note that this defines $\sigma\left(h_{m_{\ell,r}} \times h_m\right)$ conditional on the realized history $\left( h_{m_{\ell,r}}, h_{m_{\ell,r}}^A \right)$.
                    Averaging over the realizations of $h_{m_{\ell,r}}^A$ defines $\sigma\left(h_{m_{\ell,r}} \times h_m\right)$ as a distribution over actions.
                \item 
                    In $\Gamma_A$,
                    Player $1$ selects $i'$ according to $\sigma_A\left[h^A_{m_{\ell,r}}\right](h^A_m)$ and
                    Player $2$ selects $j'$ according to $\tau\left[h_{m_{\ell,r}}\right](\xi_A(h^A_m))$.
                    Then, we draw $s'$ according to $\PP(\cdot\given x_m, i', j')$, set $x' \defas \psi_A(x_{m}, i', j', s')$, and extend the history by $(i', j', x')$.
                    Similar to the case in $\Gamma$, this defines $\tau_A\left(h^A_{m_{\ell,r}} \times h^A_m\right)$ conditional on the realized history $\left( h_{m_{\ell,r}}, h_{m_{\ell,r}}^A \right)$.
            \end{itemize}
        \item
            Case 2: 
            if, for every $r' \le r$, we have that $\left\|b_{m_{\ell,r'}}-\proj(x_{m_{\ell,r'}})\right\|_1 > 2 \eps$, then, for the rest of the sub-block we proceed as in case 1, but only for $m_\eps$ stages.
            Formally, for every $m \in [1 \until m_\eps]$ and admissible continuations $h_m$ and $h_m^A$,
            \begin{itemize}
                \item 
                    In $\Gamma$, 
                    Player $1$ selects $i$ according to $\sigma_A\left[h^A_{m_{\ell,r}}\right](\xi(h_m))$ and 
                    Player $2$ selects $j$ according to $\tau\left[h_{m_{\ell,r}}\right](h_m)$.
                    Then, we draw $s$ according to $\PP(\cdot \given b_{m}, i, j)$, and 
                    extend the history by $(i, j, s)$.
                \item 
                    In $\Gamma_A$,
                    Player $1$ selects $i'$ according to $\sigma_A[h^A_{m_{\ell,r}}](h^A_m)$ and
                    Player $2$ selects $j'$ according to $\tau[h_{m_{\ell,r}}](\xi_A(h^A_m))$.
                    Then, we draw $s'$ according to $\PP(\cdot\given x_m, i', j')$, set $x' \defas \psi_A(x_{m}, i', j', s')$, and extend the history by $(i', j', x')$.
            \end{itemize}
    \end{itemize}
    Extending $\sigma$ and $\tau_A$ to histories that are not admissible arbitrarily, we conclude the construction of $\sigma$ and $\tau_A$ for a general block and therefore for all histories.
    Although the dynamic on how to select actions is very similar in both cases, the emphasis is on which continuation strategy is selected.
    Notably, in case 1, the continuation strategy is the same for the remainder of the block.
    In contrast, in case 2, the continuation strategy changes at the end of the sub-block, i.e., at the next sub-block, a different continuation strategy is to be used.
    Importantly, in case 2, both games are using the same strategy pairs during the $(\ell, r)$ sub-block. 

    Define the first time case 1 is satisfied within the $\ell$-th block by
    \[
        T_\ell
            \defas \inf\left\{ r \in \left[0 \until \omega \lceil 1/\eps \rceil - 1\right] \,\middle|\, \,\left\|B_{m_{\ell,r}}-\proj\left(X_{m_{\ell,r}}\right)\right\|_1 \le 2 \eps \right\}.
    \]
    
    If $T_\ell = r < \infty$, then the strategy pairs in each game are the same for the remainder of the block.
    Formally, conditioning on $\left(h_{m_{\ell,r}}, h^A_{m_{\ell,r}}\right) \in \{ T_\ell = r < \infty \}$, for every admissible history $(h_m, h^A_m)$ with $m \in [1 \until \eta - r m_\eps]$,
    \[
        \sigma [h_{m_{\ell, r}}] (h_m) = \sigma_A [h^A_{m_{\ell, r}}] (\xi (h_m) )
        \qquad \text{and} \qquad 
        \tau_A [h^A_{m_{\ell, r}}] (h^A_m) = \tau [h_{m_{\ell, r}}] (\xi_A (h^A_m) )
    \]
    Therefore, similar to the proof of \Cref{Lemma: continuity reward}, 
    \begin{equation}
        \left| \EE_\nu\left( \dfrac{1}{\eta - r m_\eps} \sum_{m = m_{\ell, r}}^{m_{\ell+1, 0}} \overline{G}_m-\overline{G}_m^A \,\middle|\, T_\ell = r \right) \right| 
            \le 2 \eps .
            \label{Equation: Case 1}
    \end{equation}

    If $T_\ell > r$, then the strategy pairs in each game are the same for the remainder of the $(\ell, r)$ sub-block.
    We use the Doeblin condition to state that case 1 will hold after a few sub-blocks, and deduce that the payoffs for the whole block remain close.
    Note that the Doeblin condition can be applied to the abstract game within a sub-block because, within a block, the dynamic is preserved without approximation.
    Formally, conditioning on $\left(h_{m_{\ell,r}}, h^A_{m_{\ell,r}}\right) \in \{ T_\ell > r \}$, for every pair of admissible histories $(h_m, h^A_m)$ with $m \in [1 \until m_\eps]$,
    \[
        \sigma\left[h_{m_{\ell, r}}\right] (h_m) = \sigma_A \left[h^A_{m_{\ell, r}}\right] \left(\xi (h_m) \right)
        \qquad \text{and} \qquad 
        \tau_A \left[h^A_{m_{\ell, r}}\right] \left(h^A_m\right) = \tau \left[h_{m_{\ell, r}}\right] \left(\xi_A \left(h^A_m\right) \right)
    \]
    Therefore, by the Doeblin condition, see \Cref{Definition: Doeblin Hidden Stochastic Game}, there exists $\overline{b} \in \Delta(\K)$, such that both
    \begin{align*}
        \PP_{\sigma\left[h_{m_{\ell,r}}\right], \tau\left[h_{m_{\ell,r}}\right]}^{b_{m_{\ell,r}}} \left( \left\| B_{m_{\eps}} - \overline{b} \right\|_1 \le \eps  \right) 
            \ge \delta_\eps
        \, \text{ and } \, 
        \PP_{\sigma_A\left[h^A_{m_{\ell,r}}\right], \tau_A\left[h^A_{m_{\ell,r}}\right]}^{x_{m_{\ell,r}}} \left( \left\| \proj(X_{m_{\eps}}) - \overline{b} \right\|_1 \le \eps \right) 
            \ge \delta_\eps 
    \end{align*}
    hold at the same time.
    We deduce that,
    \[
        \PP_{\nu} \left( 
            \left\|B_{m_{\ell,r+1}}-\proj\left(X_{m_{\ell,r+1}}\right)\right\|_1 \le 2 \eps
            \,\middle|\,
            T_\ell > r
            \right)
            \ge \delta_\eps^2 . 
    \]
    With this, we deduce that only few sub-blocks are required to fall back to case 1 with high probability.

    By the definition of $\omega$ and $\eta$,
    \begin{equation}
    \label{Equation: Small stopping time}        
        \PP_{\nu}\left( T_\ell > \omega \right)
            \le (1 - \delta_\eps^2)^\omega 
            \le \eps .
    \end{equation}    
    Therefore, for each block $\ell$,
    \begin{align*}
        &\left|\EE_{\sigma,\tau}^{b_1}\left(\dfrac{1}{\eta}\sum_{m=\ell \eta+1}^{(\ell+1)\eta}\overline{G}_m\right)-\EE_{\sigma_A,\tau_A}^{x_1}\left(\dfrac{1}{\eta}\sum_{m=\ell \eta+1}^{(\ell+1)\eta}\overline{G}^A_m\right)\right| \\
            &\qquad = \left|\EE_\nu\left(\dfrac{1}{\eta}\sum_{m=\ell \eta+1}^{(\ell+1)\eta}\left[\overline{G}_m-\overline{G}_m^A\right]\right)\right| 
                &(\text{def. } \nu) \\
            % &\qquad \le \EE_\nu\left(\dfrac{1}{\eta}\sum_{m=\ell \eta+1}^{(\ell+1)\eta}\left|\overline{G}_m-\overline{G}_m^A\right| \right)
            %     &(\text{triangle inequality}) \\
            &\qquad = \left| \sum_{r = 0}^{\omega} \EE_\nu\left(\dfrac{1}{\eta}\sum_{m=\ell \eta+1}^{(\ell+1)\eta}\overline{G}_m-\overline{G}_m^A \,\middle|\, T_\ell = r \right) \, \PP_\nu (T_\ell = r) \right. \\
                &\qquad \qquad 
                \left. + \EE_\nu\left(\dfrac{1}{\eta}\sum_{m=\ell \eta+1}^{(\ell+1)\eta}\overline{G}_m-\overline{G}_m^A \,\middle|\, T_\ell > \omega \right) \PP_\nu (T_\ell > \omega) \right|
                &(\text{conditioning on } T_\ell) \\
            &\qquad \le \sum_{r = 0}^{\omega} \left| \EE_\nu\left(\dfrac{1}{\eta}\sum_{m=\ell \eta+1}^{(\ell+1)\eta}\overline{G}_m-\overline{G}_m^A \,\middle|\, T_\ell = r \right) \right| \, \PP_\nu (T_\ell = r)  \\
                &\qquad \qquad 
                + \left| \EE_\nu\left(\dfrac{1}{\eta}\sum_{m=\ell \eta+1}^{(\ell+1)\eta}\overline{G}_m-\overline{G}_m^A \,\middle|\, T_\ell > \omega \right) \right| \, \PP_\nu (T_\ell > \omega) 
                &(\text{convexity}) \\
            &\qquad \le \sum_{r = 0}^{\omega} \left|  \EE_\nu\left(\dfrac{1}{\eta}\sum_{m=\ell \eta+1}^{(\ell+1)\eta}\overline{G}_m-\overline{G}_m^A \,\middle|\, T_\ell = r \right) \right| \, \PP_\nu (T_\ell = r)  \\
                &\qquad \qquad 
                + \PP_\nu (T_\ell > \omega)
                &(g(\cdot) \in [0, 1]) \\
            &\qquad \le \sum_{r = 0}^{\omega}  \left( 
                \dfrac{r m_\eps}{\eta} + \dfrac{1}{\eta} \left| \EE_\nu\left( \sum_{m = m_{\ell, r}}^{m_{\ell+1, 0}} \overline{G}_m-\overline{G}_m^A \,\middle|\, T_\ell = r \right) \right| 
                \right) \, \PP_\nu (T_\ell = r) \\
                &\qquad \qquad 
                + \PP_\nu (T_\ell > \omega)
                &(g(\cdot) \in [0, 1]) \\
            &\qquad \le \sum_{r = 0}^{\omega}  \left( 
                \dfrac{r m_\eps}{\eta} + \dfrac{\eta - r m_\eps}{\eta} \eps 
                \right) \, \PP_\nu (T_\ell = r) 
                + \PP_\nu (T_\ell > \omega)
                &(\text{\cref{Equation: Case 1}}) \\
            &\qquad \le \sum_{r = 0}^{\omega}  \left( 
                \eps + \eps 
                \right) \, \PP_\nu (T_\ell = r) 
                + \PP_\nu (T_\ell > \omega)
                &(\eta \ge r m_\eps / \eps) \\
            &\qquad = 2 \eps \, \PP_\nu (T_\ell \le \omega) 
                + \PP_\nu (T_\ell > \omega)
                &(\text{summing up}) \\
            &\qquad \le 2 \eps \, (1 - \eps) + \eps
                &(\text{\cref{Equation: Small stopping time}}) \\
            &\qquad \le 3 \eps .
    \end{align*}    

    Summing over blocks, we get that, for every $n_1 \in \NN^*$, 
    \[
        \left|\EE_{\sigma,\tau}^{b_1}\left(\dfrac{1}{n_1 \eta}\sum_{m=1}^{n_1 \eta} \overline{G}_m\right) - \EE_{\sigma_A,\tau_A}^{x_1}\left(\dfrac{1}{n_1 \eta} \sum_{m=1}^{n_1 \eta} \overline{G}_m^A \right) \right| 
            \le 3 \eps.
    \]
    To conclude the statement for all $n$, and not only multiples of $\eta$, consider $\eta' \ge \eta \lceil 1 / \eps \rceil = \omega \, m_\eps \lceil 1 / \eps \rceil^2$ and the abstract stochastic game $\Gamma_{A(\eta')}$.
    Then, let $n \in \NN^*$ be arbitrary.
    Note that, if $n \le \eta'$, then the statement is direct because $\Gamma$ and $\Gamma_{A(\eta')}$ have the same dynamic (encoded differently) up to stage $\eta'$.
    If $n > \eta'$, then rewrite $n$ as blocks of length $\eta$ and write $n = n_1 \, \eta + n_0$, where $n_0 \in [0 \until \eta-1]$.
    After $\eta'$ stages, $\eta$ more stages contribute at most $\eps$ to the payoff, so the payoff for $n_1 \, \eta$ stages and for $n$ stages differ by at most $\eps$.
    Therefore,
    \[
        \left|\EE_{\sigma,\tau}^{b_1}\left(\dfrac{1}{n}\sum_{m=1}^{n} \overline{G}_m\right) - \EE_{\sigma_A,\tau_A}^{x_1}\left(\dfrac{1}{n} \sum_{m=1}^{n} \overline{G}_m^A \right) \right| 
            \le 5 \eps,
    \]
    which concludes the proof.
\end{proof}

We now prove \Cref{Theorem: Doeblin Hidden Stochastic Games}, drawing inspiration from the approach used in the proof of~\cite[Theorem 1]{chatterjee2025ergodic}.

\begin{proof}[Proof of \Cref{Theorem: Doeblin Hidden Stochastic Games}]
    Consider a Doeblin hidden stochastic game $\Gamma$ with initial belief $b_1 \in \Delta(\K)$.
    We prove each statement in turn.

    \paragraph{Existence of the Uniform Value}
    Note that, for all $\eta \in \NN^*$, the abstract game $\Gamma_A(b_1, \eta)$ is a stochastic game with finite-state and finite-action sets.
    By Mertens and Neyman~\cite{mertens1981stochastic}, its uniform value exists, and we denote it by $v_{A(\eta)}$.
    Let $v(b_1)$ be an arbitrary accumulation point of the sequence $\{ v_{A(\eta)} \}_{\eta \in \NN^*}$. 
    We show that $v(b_1)$ is the uniform value of $\Gamma$, i.e., for every $\eps > 0$, each player can uniformly guarantee $v(b_1)$ up to $\eps$ in $\Gamma(b_1)$.    

    Consider $\eps > 0$.
    For every $\eta \in \NN^*$, by definition of the uniform value, there exist a strategy pair $(\hat\sigma_A,\hat\tau_A) \in \Sigma_A \times \T_A$ and $n_{\eta, \eps} \in \NN^*$, such that, for all $n \ge n_{\eta, \eps}$ and $(\sigma_A, \tau_A) \in \Sigma_A \times \T_A$,
    \begin{align}
        \gamma_{n}^{A(\eta)}(x_1, \hat\sigma_A, \tau_A) 
            \ge v_{A(\eta)} - \eps
        \qquad \text{ and } \qquad
        \gamma_{n}^{A(\eta)}(x_1,\sigma_A,\hat\tau_A) 
            \le v_{A(\eta)} + \eps .
            \label{Equation: Uniform strategy}
    \end{align}
    
    \subparagraph{Player~$1$'s guarantee.} 
    By \Cref{Result: n-stage values are close}, \Cref{Equation: Coupling Player 1 abstract to real}, 
    there exists $\eta_\eps \in \NN^*$ such that, 
    for every $\eta \ge \eta_\eps$, 
    for every $\sigma_A \in \Sigma_A$ in $\Gamma_{A(\eta)}$, 
    there exists $\sigma_\eps \in \Sigma$ in $\Gamma$ such that, 
    for every $\tau \in \T$ in $\Gamma$, 
    there exists $\tau_A \in \T_A$ in $\Gamma_{A(\eta)}$ such that, 
    for every $n \in \NN^*$,
    \begin{equation}        
        \left|\gamma_n(b_1, \sigma_\eps, \tau) - \gamma_{n}^{A(\eta)}(x_1, \sigma_A, \tau_A)\right| 
            \le \eps .
            \label{Equation: Player 1 guarantee}
    \end{equation}
    
    Choose $\eta_\eps^* \ge \eta_\eps$ such that $v_{A(\eta_\eps^*)} \ge v(b_1) - \eps$. 
    Consider the strategy $\hat\sigma_{A(\eta_\eps^*)}$, given by \Cref{Equation: Uniform strategy}.
    Consider the corresponding strategy $\sigma_\eps^* \in \Sigma$ in $\Gamma$ given by \Cref{Equation: Player 1 guarantee} for $\sigma_A = \hat\sigma_{A(\eta_\eps^*)}$.
    We show that $\sigma_\eps^*$ guarantees $v(b_1) - 3 \eps$ in $\Gamma$.
    Indeed, for every $\tau \in \T$ in $\Gamma$, and every $n \ge n_{\eta_\eps^*, \eps}$, we have that there exists $\tau_A \in \T_A$ in $\Gamma_{A(\eta_\eps^*)}$ given by \Cref{Equation: Player 1 guarantee} such that
    \begin{align*}
        \gamma_n(b_1, \sigma_\eps^*, \tau) 
            &\ge \gamma_{n}^{A(\eta_\eps^*)}(x_1, \hat\sigma_{A(\eta_\eps^*)}, \tau_A) - \eps 
                &(\text{\cref{Equation: Player 1 guarantee}}) \\
            &\ge v_{A(\eta_\eps^*)} - 2 \eps
                &(\text{\cref{Equation: Uniform strategy}}) \\
            &\ge v(b_1) - 3 \eps .
                &(\text{def. } \eta_\eps^*)
    \end{align*}
    Therefore, Player $1$ uniformly guarantees $v(b_1) - 3 \eps$ in $\Gamma(b_1)$.

    \subparagraph{Player~$2$'s guarantee.} 
    The argument is symmetric to Player~$1$'s.
    Formally, by \Cref{Result: n-stage values are close}, \Cref{Equation: Coupling Player 2 abstract to real}, 
    there exists $\eta_\eps \in \NN^*$ such that, 
    for every $\eta \ge \eta_\eps$, 
    for every $\tau_A\in \T_A$, 
    there exists $\tau \in \T$ such that, 
    for every $\sigma \in \Sigma$, 
    there exists $\sigma_A \in \Sigma_A$ such that, 
    for every horizon $n \in \NN^*$,
    \begin{equation}
        \left|\gamma_n(b_1,\sigma,\tau)-\gamma_{n}^{A(\eta)}(x_1,\sigma_A,\tau_A)\right| 
            \le \eps.
            \label{Equation: Player 2 guarantee} 
    \end{equation}

    Choose $\eta_\eps^* \ge \eta_\eps$ such that $v_{A(\eta_\eps^*)} \le v(b_1) + \eps$. 
    Consider the strategy $\hat\tau_{A(\eta_\eps^*)}$, given by \Cref{Equation: Uniform strategy}.
    Consider the corresponding strategy $\tau_\eps^* \in \T$ in $\Gamma$ given by \Cref{Equation: Player 2 guarantee} for $\tau_A = \hat\tau_{A(\eta_\eps^*)}$.
    We show that $\tau_\eps^*$ guarantees $v(b_1) + 3 \eps$ in $\Gamma$.
    Indeed, for every $\sigma \in \Sigma$, and every $n \ge n_{\eta_\eps^*, \eps}$, we have that there exists $\sigma_A \in \Sigma_A$ given by \Cref{Equation: Player 2 guarantee} such that
    \begin{align*}
        \gamma_n(b_1, \sigma, \tau_\eps^*) 
            &\le \gamma_{n}^{A(\eta_\eps^*)}(x_1, \sigma_A, \hat\tau_{A(\eta_\eps^*)}) + \eps 
                &(\text{\cref{Equation: Player 2 guarantee}}) \\
            &\le v_{A(\eta_\eps^*)} + 2 \eps
                &(\text{\cref{Equation: Uniform strategy}}) \\
            &\le v(b_1) + 3 \eps .
                &(\text{def. } \eta_\eps^*)
    \end{align*}
    Therefore, Player $2$ uniformly guarantees $v(b_1) + 3 \eps$ in $\Gamma(b_1)$.
    In conclusion, $\Gamma(b_1)$ has uniform value $v(b_1)$.

    \paragraph{Independence of the Uniform Value}
    Consider an alternative initial belief $b_1' \in \Delta(\K)$.
    We show that $v(b_1) = v(b_1')$ by connecting their respective abstract stochastic games. 
    For $\eps > 0$, consider two abstract stochastic games $\Gamma_A(b_1, \eta_\eps)$ and $\Gamma_A(b_1', \eta_\eps')$ such that 
    \[
        | v(b_1) - v_{A(b_1, \eta_\eps)} | \le \eps
        \qquad \text{and} \qquad 
        | v(b_1') - v_{A(b_1', \eta_\eps')} | \le \eps .
    \]
    Moreover, consider $\eta_\eps$ and $\eta_\eps'$ large enough such that, through a similar coupling argument as in the proof of \Cref{Result: n-stage values are close}, 
    \[
        | v_{A(b_1, \eta_\eps)} - v_{A(b_1', \eta_\eps')} | \le \eps .
    \]
    % \comment[id=R]{This sounds informal. @David, let me know if you feel comfortable with not giving more details here.}
    % \comment[id=D]{If it's fine for you, I agree to delete it, it's too informal }
    % \comment[id=R]{What do you mean delete it? How would you propose to do the proof?}
    Therefore, $| v(b_1) - v(b_1') | \le 3 \eps$.
    Because $\eps > 0$ is arbitrary, we deduce that $v(b_1)=v(b_1')$, which concludes the proof.

    \paragraph{Approximation of the Uniform Value}
    We provide an explicit algorithm to approximate the uniform value $v$ of $\Gamma$, relying on the mapping $\eps \mapsto (m_\eps, \delta_\eps)$ from the Doeblin condition.
    Consider $\eps > 0$, and suppose that we are given $(m_\eps, \delta_\eps)$ from the Doeblin condition.
    Choose $\omega_\eps \in \NN^*$ large enough such that $(1 - \delta_\eps^2)^{\omega_\eps} \le \eps$ and $\omega_\eps \, m_\eps \ge |\K|^2$, and define $\eta_\eps \defas \omega_\eps \, m_\eps \lceil 1 / \eps \rceil^2$.
    We show that the uniform value of the abstract stochastic game $\Gamma_{A(\eta_\eps)}$ is a good approximation of $v$.
    Formally, we show that
    \[
        | v_{A(\eta_\eps)} - v | \le \eps .
    \]
    Given this inequality, we conclude by approximating $v_{A(\eta_\eps)}$ using, for example, the algorithm described in~\cite[Algorithm 1]{oliu2021new}.

    We show that, for all $n \in \NN^*$,
    \[
        | v^{A(\eta_\eps)}_n(x_1) - v_n(b_1) | \le \eps ,
    \]
    and conclude by taking $n \to \infty$.
    Consider $n \in \NN^*$.
    Take $\sigma_n^*$ an optimal strategy for the $n$-stage payoff in $\Gamma$.
    By \Cref{Result: n-stage values are close}, \Cref{Equation: Coupling Player 1 real to abstract}, 
    there exists $\sigma_A \in \Sigma_A$ such that, 
    for every $\tau_A \in \T_A$, 
    there exists $\tau \in \T$ such that, 
    \[
        \left| \gamma_n(b_1, \sigma_n^*, \tau) - \gamma_{n}^{A(\eta_\eps)}(x_1, \sigma_A, \tau_A) \right| 
            \le \eps .
    \]
    In particular,
    \[
        v_n(b_1) 
            = \min_{\tau \in \T} \gamma_n(b_1, \sigma_n^*, \tau)
            \le \min_{\tau_A \in \T_A} \gamma_{n}^{A(\eta_\eps)}(x_1, \sigma_A, \tau_A) + \eps
            \le v^{A(\eta_\eps)}_n(x_1) + \eps .
    \]
    Similarly, taking $\tau_n^*$ an optimal strategy for the $n$-stage payoff in $\Gamma$, by \Cref{Result: n-stage values are close} and \Cref{Equation: Coupling Player 2 real to abstract}, we have that 
    \[
        v_n(b_1) \ge v^{A(\eta_\eps)}_n(x_1) - \eps ,
    \]
    which concludes the proof.
\end{proof}

Our approximation scheme is detailed in Algorithm \ref{Algorithm: Hidden stochastic games}.

\begin{algorithm}
    \begin{algorithmic}[1]
        \Require Doeblin hidden stochastic game $\Gamma=(\K, \I, \J, \S, p, g)$ and $\eps > 0$.
        \Ensure $v$ is an additive approximation of the uniform value of $\Gamma$ up to $\eps$.
        \State Query $(m_\eps, \delta_\eps)$ from the Doeblin condition.
        \State $\omega_\eps \gets \left \lceil \max \{ \log(\eps) / \log(1 - \delta_\eps^2),  |\K|^2 / m_\eps \} \right \rceil$.
        \State $\eta_\eps \gets \omega_\eps \, m_\eps \lceil 1 / \eps \rceil^2$.
        \State $v \gets v_{A(\eta_\eps)}(x_1)$, the uniform value of the abstract stochastic game $\Gamma_A(b_1, \eta_\eps)$.
        \State \Return $v$
    \end{algorithmic}  
    \caption{Approximation of the uniform value of Doeblin hidden stochastic games}
    \label{Algorithm: Hidden stochastic games}
\end{algorithm}

%%%%%%%
%%%%%%%
%%%%%%%
%%%%%%%
%%%%%%%
%%%%%%%

\section{Proofs of Theorem \ref{Theorem: Ergodic Blind is Doeblin} and Theorem \ref{Theorem: Primitive Hidden is Doeblin}}\label{Section: proof Primitive and Ergodic are Doeblin}

\subsection{Ergodic Blind Stochastic Games}
We prove \Cref{Theorem: Ergodic Blind is Doeblin}, i.e., ergodic blind stochastic games satisfy the Doeblin condition. 
In particular, we show that the exact problem in Doeblin hidden stochastic games is undecidable.

\begin{proof}[Proof of \Cref{Theorem: Ergodic Blind is Doeblin}]    
    Consider an ergodic blind stochastic game $\Gamma$.
    Fix $\eps > 0$. 
    We show that there exist explicit $m_\eps \in \NN^*$ and $\delta_\eps > 0$ such that,
    for all $\left(\sigma, \tau\right) \in \Sigma \times \T$, 
    there exists $\overline{b}\in \Delta(\K)$ such that,
    for all $b \in \Delta(\K)$, 
    \[
        \PP^{b}_{\sigma,\tau} \left( 
            \left\| B_{m_\eps} - \overline{b} \right\|_1 \le \eps
        \right)
            \ge \delta_\eps.
    \]

    By the ergodicity condition, see \Cref{Definition: Ergodic blind stochastic games}, applied to $\eps / 2 > 0$, there exists an integer $m_\eps\in \NN^*$ such that, for every history $h_{m_\eps}\in \H_{m_\eps}$,
    %for all action pair sequences $a^{m_\eps} \in (\I \times \J)^{m_\eps}$, 
    \[
        \tau_e(T(h_{m_\eps})) \le \frac{\eps}{2} .
    \]
    In particular, for all initial beliefs $b, b'\in \Delta(\K)$,
    \begin{align*}
        \left\|b^{b}_{h_{m_\eps}} - b^{b'}_{h_{m_\eps}}\right\|_1
            &= \left\|b^\top T\left(h_{m_\eps}\right) - (b')^\top T\left(h_{m_\eps}\right)\right\|_1\\
            &\le \tau_e\left(T\left(h_{m_\eps}\right)\right)\|b-b'\|_1\\
            &\le \eps .
    \end{align*}
    
    Define $\delta_\eps \defas |\I \times \J|^{- m_\eps} > 0$.
    Consider $\left(\sigma, \tau\right) \in \Sigma \times \T$ arbitrary. 
    Note that, by definition of $\delta_\eps$, there exists $h_{m_\eps} \in H_{m_\eps}$ such that 
    \[
        \PP_{\sigma, \tau}( H_{m_\eps} = h_{m_\eps} ) \ge \delta_\eps .
    \]
    Take an arbitrary belief $b_1 \in \Delta(\K)$ and define $\overline{b}^\top \defas b_1^\top T\left(h_{m_\eps}\right)$.
    Then, for every $b \in \Delta(\K)$
    \begin{align*}
        \PP^{b}_{\sigma,\tau} \left( \left\| B_{m_\eps} - \overline{b} \right\|_1 \le \eps \right)
            &= \PP_{\sigma,\tau} \left( \left\| b^\top T\left(H_{m_\eps}\right) - b_1^\top T\left(h_{m_\eps}\right) \right\|_1 \le \eps \right)\\
            &\ge \PP_{\sigma, \tau}\left( H_{m_\eps} = h_{m_\eps} \right)\\
            &\ge \delta_\eps ,
    \end{align*}
    and the Doeblin condition holds.

    Lastly, the explicit mapping $\eps \mapsto (m_\eps, \delta_\eps)$ is given as follows.
    By~\cite[Corollary 4.6 and Theorem 4.7, p. 90]{paz1971introduction}, taking $m_0 \defas 3^{|\mathcal{K}|}$, we have that, for every $h_{m_0} \in \H_{m_0}$, we get that $\tau_e(T(h_{m_0})) < 1$.    
    Define
    \[
        m_{\eps} \defas \left\lceil \frac{ \ln(\eps/2) }{ \ln (\overline{\tau}(m_0)) } \right\rceil m_0 ,
    \]
    where $\overline{\tau}(m_0) \defas \max \{ \tau_e(T(h_{m_0})) | h_{m_0} \in \H_{m_0} \} < 1$.
    Note that, by submultiplicativity of the $\tau_e$, for every $h_{m_\eps} \in H_{m_\eps}$, we have that
    \[
        \tau_e\left( T\left(h_{m_\eps}\right)\right)
            \le \left(\overline{\tau}(m_0)\right)^{\left\lceil \ln(\eps/2) / \ln (\overline{\tau}(m_0)) \right\rceil}
            \le \eps/2 ,
    \]
    so $m_\eps$ satisfies the definition of ergodicity for $\eps/2$.
    Together with $\delta_\eps\coloneqq\left(|\I \times \J|\right)^{-m_\eps} > 0$, this provides an explicit mapping $\eps \mapsto (m_\eps, \delta_\eps)$.

    Finally, we deduce that the exact problem is undecidable in Doeblin hidden stochastic games as follows.
    By~\cite{chatterjee2025ergodic}, computing the uniform value in Markov blind MDPs is undecidable.
    By~\cite{chatterjee2025ergodic}, every Markov blind MDP is ergodic and thus satisfies the Doeblin condition.
    Therefore, the exact problem in Doeblin hidden stochastic games is undecidable.
\end{proof}

\subsection{Primitive Hidden Stochastic Games}
We prove \Cref{Theorem: Primitive Hidden is Doeblin}, i.e., primitive hidden stochastic games satisfy the Doeblin condition.

% \todo[inline]{David: I proposed a new proof below.
% Let me know what you think.
% Previous attempts are commented below.}
% \todo[inline]{RAI: I give a counter-proposal. I think it is shorter and clearer. In any case, your proposal (which is below) led me to new questions about the definition of primitivity. Please have a look.}

\begin{proof}[Proof of \Cref{Theorem: Primitive Hidden is Doeblin}]
    
    Consider a primitive hidden stochastic game $\Gamma$ and fix $\eps > 0$.
    We show that there exists explicit $m_\eps \in \NN^*$ and $\delta_\eps > 0$ such that,
    for all $\left(\sigma, \tau\right) \in \Sigma \times \T$, 
    there exists $\overline{b}\in \Delta(\K)$ with the following property:
    for all $b \in \Delta(\K)$, 
    \[
        \PP^{b}_{\sigma,\tau} \left( 
            \left\| B_{m_\eps} - \overline{b} \right\|_1 \le \eps
        \right)
            \ge \delta_\eps.
    \]
    By the primitive condition, see \Cref{Definition: Primitive hidden stochastic game}, %applied to $\eps > 0$, 
    there exists $m_\eps\in \NN^*$ such that, for all $h_{m_\eps} \in \H_{m_\eps}$,
    \[
        \tau_p \left(T(h_{m_\eps})\right) \le \eps.
    \]
    In particular, by~\cite{Chatterjee2026mon}, for every pair of initial beliefs $b, b' \in \Delta(\K)$ and $h_{m_\eps}\in \H_{m_\eps}$,
    \[
        \left\| b^{b}_{h_{m_\eps}} - b^{b'}_{h_{m_\eps}} \right\|_1
            \le \tau_p(T(h_{m_\eps}))
            \le \eps .
    \]

    Recall that, for every matrix $T$, if $\tau_p(T) < 1$, then all coordinates of $T$ are strictly positive.
    Therefore,
    \[
        \mu_\eps 
            \defas \min \left\{ \sum_{k' \in \K} T_{k, k'}(h_{m_\eps}) \,\middle|\, \, k \in \K , \, h_{m_\eps} \in \H_{m_\eps} \right\} 
            > 0 \,.
    \]
    Define 
    \[
        \delta_\eps\coloneqq\left(\dfrac{1}{|\I|\times |\J|}\right)^{m_\eps-1} \mu_\eps > 0 \,.
    \]
    Consider an arbitrary strategy pair $\left(\sigma, \tau\right) \in \Sigma \times \T$. 
    We show that there exists a history $h_{m_\eps}^* \in \H_{m_\eps}$ such that, for every belief $b \in \Delta(\K)$,
    \[
        \PP^{b}_{\sigma, \tau}(H_{m_\eps} = h_{m_\eps}^*) 
            \ge \delta_\eps .
    \]

    We construct the history $h_{m_\eps}^*$ inductively as follows. 
    Set $h_1^* = \emptyset$.
    Choose an arbitrary signal $s^* \in \S$.
    For every stage $m \in [1 \until m_\eps-1]$, choose $i_{m}^* \in \I$ and $j_{m}^* \in \J$ such that 
    \[
        \sigma(h_m^*)(i_m^*) \ge \frac{1}{|\I|}
        \qquad \text{and} \qquad
        \tau(h_m^*)(j_m^*) \ge \frac{1}{|\J|} .
    \]
    Then, set $h_{m+1}^* \defas h_{m}^*\times (i_m^*,j_m^*,s^*)$.
    Note that, for every belief $b \in \Delta(\K)$,
    \begin{align*}
        \PP^{b}_{\sigma,\tau} \left( H_{m_\eps} = h_{m_\eps}^* \right)
            &= \sum_{k_1,\ldots,k_{m_\eps}\in \K} b(k_1) \, 
                \prod_{m=1}^{m_\eps-1} \sigma(h_m^*)(i_m^*) \, \tau(h_m^*)(j_m^*) \, 
                P_{k_m,k_{m+1}}(i_m^*, j_m^*, s^*) \\
            &= \left( \prod_{m=1}^{m_\eps-1} \sigma(h_m^*)(i_m^*) \, \tau(h_m^*)(j_m^*) \right) \,
                \sum_{k_1,\ldots,k_{m_\eps}\in \K} b(k_1) \prod_{m=1}^{m_\eps-1} P_{k_m,k_{m+1}}(i_m^*, j_m^*, s^*) \\
            &= \left( \prod_{m=1}^{m_\eps-1} \sigma(h_m^*)(i_m^*) \, \tau(h_m^*)(j_m^*) \right) \,
                b^\top T\left(h_{m_\eps}^*\right)\mathbf{1} \\
            &\ge \left(\dfrac{1}{|\I|\times |\J|}\right)^{m_\eps-1} \mu_\eps\\
            & = \delta_\eps .
    \end{align*} 
    Take an arbitrary belief $b_1 \in \Delta(\K)$ and define $\overline{b}^\top \defas b^{b_1}_{h_{m_\eps}^*}$.
    Then, for every $b \in \Delta(\K)$
    \begin{align*}
        \PP^{b}_{\sigma,\tau} \left( \left\| B_{m_\eps} - \overline{b} \right\|_1 \le \eps \right)
            &\ge \PP^{b}_{\sigma,\tau} \left( H_{m_\eps} = h_{m_\eps}^*, \, \tau_p \left(T(h_{m_\eps}^*)\right) \le \eps \right)\\
            &= \PP^{b}_{\sigma,\tau} \left( H_{m_\eps} = h_{m_\eps}^* \right)\\
            &\ge \delta_\eps ,
    \end{align*}
    so the Doeblin condition holds.

    Lastly, the explicit mapping $\eps \mapsto (m_\eps, \delta_\eps)$ is given as follows.
    By~\cite[Theorem 1, p. 188]{cohen1982sets}, taking $m_0 \defas 2^{|\K|}$, we have that, for every history $h_{m_0} \in \H_{m_0}$, every coordinate of $T(h_{m})$ is strictly positive.
    Therefore, by definition of $\tau_p$, for every $h_{m_0} \in \H_{m_0}$, we get that $\tau_p(T(h_{m_0})) < 1$.
    Define
    \[
        m_{\eps} \defas \left\lceil \ln(\eps) / \ln (\overline{\tau}(m_0)) \right\rceil \, m_0 ,
    \]
    where $\overline{\tau}(m_0) \defas \max \{ \tau_p(T(h_{m_0})) | h_{m_0} \in \H_{m_0} \} < 1$.
    Note that, by submultiplicativity of the $\tau_p$~\cite[p. 83]{seneta2006non}, we have that, for every history $h_{m_\eps} \in \H_{m_\eps}$,
    \[
        \tau_p\left(T\left(h_{m_\eps}\right)\right)
            \le \left(\overline{\tau}(m_0)\right)^{\left\lceil \ln(\eps) / \ln (\overline{\tau}(m_0)) \right\rceil}
            \le \eps,
    \]
    so $m_\eps$ satisfies the definition of primitivity.
    Together with 
    \[
        \delta_\eps\coloneqq\left(\dfrac{1}{|\I|\times |\J|}\right)^{m_\eps-1} \mu_\eps > 0 \,,
    \]
    where $\mu_\eps \defas \min \left\{ \sum_{k' \in \K} T_{k, k'}(h_{m_\eps}) \,\middle|\, \, k \in \K , \, h_{m_\eps} \in \H_{m_\eps} \right\} > 0 $, this provides an explicit mapping $\eps \mapsto (m_\eps, \delta_\eps)$.
\end{proof}

%%%%%%%
%%%%%%%
%%%%%%%
%%%%%%%
%%%%%%%
%%%%%%%

\section{Discussion}\label{Section: Discussion}

In this section, we discuss natural extensions of the sufficient conditions presented in \Cref{Subsection: sufficient conditions} and highlight potential directions for future research.

\subsection{Relaxing Primitivity and Ergodicity Conditions}

We first introduce the subclass of \emph{ergodic hidden stochastic games} in the next definition.

\begin{Definition}[Ergodic hidden stochastic game]\label{Def: Ergodic HSG}
    A hidden stochastic game $\Gamma$ is ergodic if, for every $\eps>0$, there exists $m_\eps\in \NN^*$ such that, for every $b_1, b_1'\in \Delta(\K)$ and $h_{m_\eps}\in \H_{m_\eps}$, we have that $h_{m_\eps}\in \H_{m_\eps}(b_1) \cap \H_{m_\eps}(b_1')$ and 
    \begin{equation}
        \left\|b^{b_1}_{h_{m_\eps}}-b^{b_1'}_{h_{m_\eps}}\right\|_1 
            \le \eps.\label{Equation: forgetting property}
    \end{equation}
\end{Definition}

In the blind setting, \Cref{Def: Ergodic HSG} is equivalent to \Cref{Definition: Ergodic blind stochastic games}.
Recall that, in a blind stochastic game, every history is admissible from every initial belief.
Therefore, for every $b_1,b_1'\in \Delta(\K)$ and $h_{m_\eps}\in \H_{m_\eps}$, we have that $h_{m_\eps}\in \H_{m_\eps}(b_1) \cap \H_{m_\eps}(b_1')$.
Moreover, the ergodicity condition \eqref{Equation: Conditions for Ergodic BSGs} is equivalent to \eqref{Equation: forgetting property}.
Assume first that \eqref{Equation: Conditions for Ergodic BSGs} holds. 
Then, for every $\eps>0$, there exists $m_\eps\in \NN^*$ such that, for every $b_1,b_1'\in \Delta(\K)$ and $h_{m_\eps}\in \H_{m_\eps}$, we have that 
\[
    \left\|b_1^\top T(h_{m_\eps})-(b_1')^\top T(h_{m_\eps})\right\|_1\le \tau_e(T(h_{m_\eps}))\left\|b_1-b_1'\right\|_1\le 2\eps.
\]
Conversely, assume that \eqref{Equation: forgetting property} holds. Then, for every $\eps>0$, there exists $m_\eps\in \NN^*$ such that, we have that, for all $k,k'\in \K$ and $h_{m_\eps}\in \H_{m_\eps}$,
\[
    \left\|\delta_{\{k\}}^\top T(h_{m_\eps})-\delta_{\{k'\}}^\top T(h_{m_\eps})\right\|_1\le \eps.
\]
Taking the maximum over $\K$ yields $\max_{k,k'\in \K}\left\|\delta_{\{k\}}^\top T(h_{m_\eps})-\delta_{\{k'\}}^\top T(h_{m_\eps})\right\|_1=2\tau_e(T(h_{m_{\eps}}))$. 
Therefore, $\tau_e(T({h_{m_\eps}}))\le \eps/2$ which proves the equivalence. 

In the hidden setting, ergodic hidden stochastic games strictly generalize primitive hidden stochastic games.
Indeed, by the proof of \Cref{Theorem: Primitive Hidden is Doeblin}, primitivity implies the admissibility of histories, i.e., for every $b_1,b_1'\in \Delta(\K)$ and $h_{m_\eps}\in \H_{m_\eps}$, we have that $h_{m_\eps}\in \H_{m_\eps}(b_1) \cap \H_{m_\eps}(b_1')$.
Moreover, by Chatterjee et al.~\cite{Chatterjee2026mon}, we have that, for every $\eps>0$, there exists $m_\eps\in \NN^*$ such that, for every $b_1,b_1'\in \Delta(\K)$ and history $h_{m_\eps}\in \H_{m_\eps}$, we have that,
\[
    \left\|b^{b_1}_{h_{m_\eps}}-b^{b_1'}_{h_{m_\eps}}\right\|_1\leq \eps.
\]
Because the proof of \Cref{Theorem: Primitive Hidden is Doeblin} relies on these exact two properties, it follows that ergodic hidden stochastic games satisfy the Doeblin condition.
Finally, because Markov blind MDPs are ergodic but not primitive in general, it follows that the converse implication fails.

\begin{example}
    We construct a Doeblin POMDP $\Gamma$ that is neither ergodic nor primitive.
    Consider $\Gamma=(\K,\I,\S,p,g)$, where: 
    \[
        \K=\{k_1,k_2\},\qquad \I=\{i\},\qquad \S=\{s_1,s_2\}. 
    \]
    The reward function $g$ is arbitrary.
    The transition probabilities are given by
    \[
        p(k_1,s_1\given k_1,i)=\dfrac{1}{2},\qquad p(k_1,s_2\given k_1,i)=\dfrac{1}{2},
    \]
    and
    \[
        p(k_1,s_1\given k_2,i)=\dfrac{1}{2},\qquad p(k_2,s_2\given k_2,i)=\dfrac{1}{2}.
    \]
    Equivalently, the transition matrices are
    \[
        P(i,s_1)=\begin{pmatrix}
            1/2 & 0 \\
            1/2 & 0
        \end{pmatrix}\quad\text{ and }\quad
        P(i,s_2)=\begin{pmatrix}
            1/2 & 0 \\
            0 & 1/2
        \end{pmatrix}.
    \]

    The POMDP $\Gamma$ is not primitive.
    Indeed, for every $m\in \NN^*$, the product of matrices $P(i,s_2)^m$ is not positive.
    Moreover, the POMDP $\Gamma$ is not ergodic. 
    Indeed, fix $b_1=\delta_{\{k_1\}}$, $b_1'=\delta_{\{k_2\}}$, and $\eps=1/2$.
    For every $m\in \NN^*$, consider the admissible history consisting only of signal $s_2$, namely $h_m=(i,s_2,\ldots,i,s_2)$.
    Then, we have that
    \[
        \left\|b_{h_m}^{b_1}-b_{h_m}^{b_1'}\right\|_1=\|\delta_{k_1}-\delta_{k_2}\|_1>\eps.
    \]
    
    Finally, $\Gamma$ satisfies the Doeblin condition.
    Indeed, for every belief $b\in \Delta(\K)$, the probability of observing signal $s_1$ after playing action $i$ is equal to $1/2$.
    Therefore, for every belief $b\in \Delta(\K)$ and $\eps>0$,
    \[
        \PP^b_i\left(\left\|B_2-\delta_{\{k_1\}}\right\|_1\leq \eps\right)\geq 1/2
    \]
    We deduce that $\Gamma$ is Doeblin with parameters $m_\eps=2$, $\delta_\eps=1/2$, and $\overline{b}=\delta_{k_1}$.
\end{example}

%\paragraph{Weakly ergodic hidden stochastic games}
We can relax the ergodicity condition by dropping the admissibility property, introducing weakly ergodic hidden stochastic games as follows.

\begin{Definition}[Weakly ergodic hidden stochastic game]\label{Definition: Ergodic Hidden Stochastic Game}
    A hidden stochastic game $\Gamma$ is weakly ergodic if, for all $\eps>0$, there exists $m_\eps\in \NN^*$ such that, for every $b_1, b_1'\in \Delta(\K)$ and every $h_{m_\eps}\in \H_{m_\eps}(b_1) \cap \H_{m_\eps}(b_1')$ ,
    \[
        \left\|b^{b_1}_{h_{m_\eps}}-b^{b_1'}_{h_{m_\eps}}\right\|_1 
            \le \eps .
    \]
\end{Definition}

The next result shows that not all weakly ergodic hidden stochastic games have a uniform value.
In particular, weakly ergodic hidden stochastic games do not satisfy the Doeblin condition in general.

\begin{Theorem}\label{Theorem: weakly ergodic hidden is not doeblin}
    There exists a weakly ergodic hidden stochastic game that does not have a uniform value.
\end{Theorem}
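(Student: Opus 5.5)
We plan to start from Ziliotto's hidden stochastic game without a uniform value~\cite{ziliotto2016zero} and modify it so that it becomes weakly ergodic while the non-existence argument survives. The key observation is that weak ergodicity only constrains histories admissible from \emph{both} initial beliefs. So a game in which signals reveal the state deterministically, at least after a fixed number of stages, is trivially weakly ergodic: if $h_{m}$ is admissible from $b_1$ and from $b_1'$ and the last signal pins down the current state (or the current belief), then $b^{b_1}_{h_m}=b^{b_1'}_{h_m}$. Hence the goal is a game with no uniform value in which the public signal reveals enough to make the posterior a function of the recent history alone.

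\textbf{Step 1.} We recall Ziliotto's construction. It is, in essence, a stochastic game whose belief dynamics live on a countable set of beliefs, parametrized by an integer counter, which the players move by their actions. Equivalently, it is a game whose state is observable only up to a finite ``mask''. We will rewrite it so that the hidden part of the state is a bounded component. The signal at each stage announces the entire current state except a component that is a deterministic function of the last $L$ signals and actions. Concretely, we encode the countable belief chain by states whose identity is revealed by the signal, together with a finite ``phase'' component that is reset by the transition. This yields a belief that depends only on the last bounded window of the history.

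\textbf{Step 2.} We verify weak ergodicity. Take $m_\eps\defas L+1$, independently of $\eps$. For any $h_{m_\eps}\in\H_{m_\eps}(b_1)\cap\H_{m_\eps}(b_1')$, the posterior is computed from $b_1^\top T(h_{m_\eps})$. By construction, the matrix $T(h_{m_\eps})$ has all its nonzero mass concentrated in one column, or more generally it has rank one on the admissible support. Hence the normalized posteriors coincide, giving distance $0\le\eps$. The admissibility requirement exactly excludes the histories whose probability is zero from one of the two initial beliefs.

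\textbf{Step 3.} We transfer non-existence. We show that the $n$-stage values $v_n(b_1)$ of the modified game coincide with, or are uniformly close to, those of Ziliotto's game. We do this by exhibiting a payoff-preserving bijection between strategies, as in \Cref{Result: n-stage values are close}, but now exact. Ziliotto proves that $(v_n)$ does not converge. Since a uniform value would force $v_n\to v$, the modified game has no uniform value.

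\textbf{Main obstacle.} The hard step is Step~1: making the signal revealing enough to force the posterior to be history-determined, while keeping the oscillation of $v_n$. Ziliotto's mechanism relies on beliefs drifting in a countable family, so full revelation would destroy it. The design must keep the belief values, which are rational functions of the counter, but make them functions of observable quantities. We would first try to let the signal reveal the counter value itself, with the hidden part being a single coin whose posterior is a fixed function of that counter. Two histories admissible from different initial beliefs must then share the counter, and hence share the posterior. Checking that Ziliotto's payoff computation is unaffected by this relabeling is where most of the work lies.
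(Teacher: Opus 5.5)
There is a genuine gap. It is not a technicality in Step~1: the object you are trying to build there cannot exist.

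\textbf{Why Steps~1--2 cannot work.} Steps~1--2 ask for exact synchronization after a fixed number of stages, independent of $\eps$: the posterior is a function of the last window of $L$ action--signal triples, at distance $0$. Take any $m \ge L+1$ and any $h_m$ admissible from $b_1$. Let $w$ be the last $L$ triples of $h_m$ and $b$ the belief at the start of that window. Then $b^{b_1}_{h_m}=b^{b}_{w}$, and exact agreement says this value, call it $f(w)$, does not depend on $b$.

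So only finitely many beliefs are reachable after stage $L+1$. Consequently $\Gamma(b_1)$ is equivalent to a finite-state stochastic game:
\begin{itemize}
\item its states are the histories truncated to their last $L$ triples;
\item its signal law and reward are computed from the corresponding posterior;
\item its strategy sets coincide with those of $\Gamma(b_1)$, since the state sequence recovers the full history.
\end{itemize}
By Mertens--Neyman this game has a uniform value, which is exactly what you want to rule out.

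Your fallback of revealing the counter hits the same wall. In Ziliotto's game the counter is unbounded and the beliefs $0_m$ are pairwise distinct, which a finite $\S$ cannot encode. Any bounded encoding again yields finitely many reachable beliefs. For the same reason, Step~3 (an exact payoff-preserving correspondence with Ziliotto's game) is incompatible with Step~2.

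\textbf{The missing idea.} Weak ergodicity is only an approximate requirement, and $m_\eps$ is allowed to grow as $\eps\to 0$. Ziliotto's countable family of beliefs already meets it without any change to the mechanism:
\begin{itemize}
\item an unbroken run of $(c,d)$ within one block contracts beliefs geometrically, with $0_m\to\delta_{\{\zeroplus\}}$ and $1_m\to\delta_{\{\oneplus\}}$ at rate $O(2^{-m})$;
\item playing $q$, observing $d'$ after $c$, or absorption sends every admissible belief to the same point, after which the posteriors coincide.
\end{itemize}
So $m_\eps$ of order $\log(1/\eps)$ suffices.

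The one modification you need is to append to the signal a component revealing the current block ($\B_0$, $\B_1$, or an absorbing state). Without it, weak ergodicity genuinely fails. A history of $(c,c)$ action pairs is admissible both from a belief on $\B_0$ and from a belief on $\B_1$, yet the two posteriors stay in disjoint blocks at $L_1$-distance $2$. Along Ziliotto's play the block is already determined by past actions and signals, so this extra component leaves the belief dynamics unchanged. Non-existence of the uniform value then transfers verbatim from Ziliotto's theorem, with no need for a new payoff computation.
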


\begin{proof}[Proof of \Cref{Theorem: weakly ergodic hidden is not doeblin}]
    Our argument builds on the counterexample given by Ziliotto~\cite{ziliotto2016zero}. 
    Consider a hidden stochastic game $\Gamma$, where:
    \begin{itemize}
        \item 
            $\K=\left\{\zeroplus,\zeroplusplus, \abszero, \oneplus, \oneplusplus, \onetop, \absone\right\}$.
        \item 
            $\I=\J=\{c,q\}$.
        \item 
            $\S=\S_1\times\S_2$ with $\S_1=\{d,d'\}$ and $\S_2=\{\Zero,\One,\Zeroabs,\Oneabs\}$.   
            The set of signals $\S_2$ partitions the state space $\K$ into blocks $\B_0=\{\zeroplus,\zeroplusplus\}$, $\B_1=\{\oneplus,\oneplusplus,\onetop\}$, $\B_{\abszero}=\{\abszero\}$, and $\B_{\absone}=\{\absone\}$. Define the block signal function $\zeta\colon \K\to \S_2$ by 
            \[
                \zeta(k)=
                \begin{cases}
                    \One & \text{if } k\in \B_1,\\
                    \Zero & \text{if } k\in \B_0,\\
                    \Oneabs & \text{if } k\in\B_{\absone},\\
                    \Zeroabs & \text{if } k\in \B_{\abszero}.
                \end{cases}
            \]
            Formally, at each stage $m\in\NN^*$, the public signal is a pair $(s_{1,m},s_{2,m})\in \S_1\times \S_2$ where $s_{2,m}=\zeta(k_{m})$, which reveals the block of $k_{m}$.
        \item 
            The stage reward function is $1$ in states $\absone$, $\oneplusplus$, $\oneplus$, and $\onetop$, and $0$ in states $\abszero$, $\zeroplusplus$, and $\zeroplus$, independently of the actions.
    \end{itemize}
    The transitions are defined in \Cref{Figure: Counterexample of HSG}.
    An arrow from a state $k\in \K$ to $k'\in \K$ labeled $(i,p,s_1)\in \{c,q\}\times [0,1]\times \{d,d'\}$ means that, if the player who controls state $k$ plays action $i$, then with probability $p$ the state is $k'$ and the signal $s_1$ is observed. In particular, the states $\abszero$ and $\absone$ are absorbing states. Moreover, Player $1$ controls the states in $\B_0$, while Player $2$ controls the states in $\B_1$.\\
    
    The dynamics of the game present the following features:
    \begin{itemize}
        \item 
            Playing action $q$ either switches between the non-absorbing blocks $\B_0$ and $\B_1$, or goes to $\B_{\abszero}$ and $\B_{\absone}$.
        \item 
            The only non-absorbing next beliefs are the following:
            \begin{align*}
                0_m&\defas2^{-m}\delta_{\{\zeroplusplus\}}+\left(1-2^{-m}\right)\delta_{\{\zeroplus\}}\\
                1_{2m}&\defas2^{-2m}\delta_{\{\oneplusplus\}}+\left(1-2^{-2m}\right)\delta_{\{\oneplus\}}\\
                1_{2m+1}&\defas2^{-2m}\delta_{\{\onetop\}}+\left(1-2^{-2m}\right)\delta_{\{\oneplus\}}.
            \end{align*}
            Moreover:
            \begin{itemize}
                \item 
                    If the belief is $0_m$ and Player $1$ plays $c$, then with probability $1/2$ the signal is $d$ and the next belief is $0_{m+1}$, while with probability $1/2$, the signal is $d'$ and the next belief ``resets'' to $0_0=\delta_{\{\zeroplusplus\}}$.
                \item
                    If the belief is $1_m$ and Player $2$ plays $c$, then with probability $1/2$ the signal is $d$ and the next belief is $1_{m+1}$, while with probability $1/2$ the signal is $d'$ and the next belief ``resets'' to $1_0=\delta_{\{\oneplusplus\}}$.
                \item
                    From $0_m$, playing action $q$ yields $d$ with probability $1-2^{-m}$ and moves to $1_0=\delta_{\{\oneplusplus\}}$, or $d'$ with probability $2^{-m}$ and goes to $\delta_{\{\abszero\}}$. 
                \item 
                    From $1_{2m}$ or $1_{2m+1}$, playing action $q$ yields $d$ with probability $1-2^{-2m}$ and moves to $0_0=\delta_{\{\zeroplusplus\}}$, or $d'$ with probability $2^{-2m}$ and absorbs into $\delta_{\{\absone\}}$.
            \end{itemize}
    \end{itemize}
    \begin{center}
    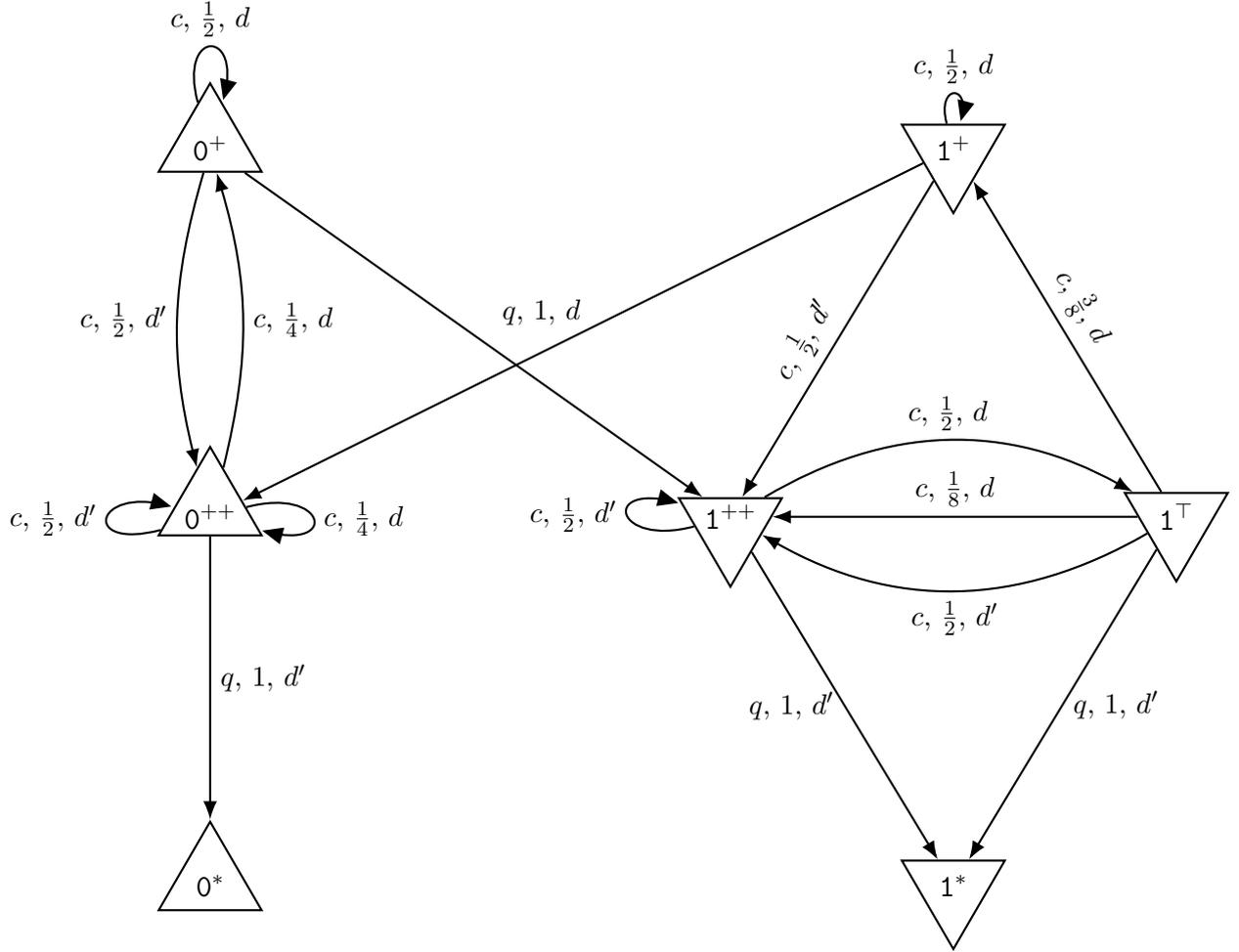
\begin{figure}[ht]
        \begin{tikzpicture}
            
            \begin{scope}[local bounding box=diagram]
                
                \node[p1state] (zplus) at (-6, 5) {$\zeroplus$};
                \node[p2state] (oplus) at (4, 5) {$\oneplus$};
                
                \node[p1state] (zpp)   at (-6, 0) {$\zeroplusplus$};
                \node[p2state] (opp)   at (1, 0) {$\oneplusplus$};
                \node[p2state] (t)     at (7, 0) {$\onetop$};
                
                \node[p1state] (zstar) at (-6,-5) {$\abszero$};
                \node[p2state] (ostar) at (4,-5) {$\absone$};
                
                \path
                (zplus) edge[loop above] node {$c,\,\tfrac12,\,d$} (zplus)
                (zpp) edge[bend right=15] node[right] {$c,\,\tfrac14,\,d$} (zplus)
                (zplus)   edge[bend right=15] node[left]  {$c,\,\tfrac12,\,d'$} (zpp)
                (zpp)   edge[loop right]   node {$c,\,\tfrac14,\,d$} (zpp)
                (zpp)   edge[loop left]    node {$c,\,\tfrac12,\,d'$} (zpp)
                (zpp)   edge               node[right] {$q,\,1,\,d'$} (zstar);
                
                \path
                (oplus) edge[loop above] node {$c,\,\tfrac12,\,d$} (oplus)
                (oplus)   edge node[sloped,above,pos=.55] {$c,\,\tfrac12,\,d'$} (opp)
                (t)     edge node[sloped,above,pos=.55] {$c,\,\tfrac{3}{8},\,d$} (oplus)
                (opp)   edge[bend left=30] node[above] {$c,\,\tfrac12,\,d$} (t)
                (t)     edge node[above] {$c,\,\tfrac18,\,d$} (opp)
                (t)     edge[bend left=30] node[below] {$c,\,\tfrac12,\,d'$} (opp)
                (opp)   edge               node[left]  {$q,\,1,\,d'$} (ostar)
                (opp)   edge[loop left]    node {$c,\,\tfrac12,\,d'$} (opp)
                (t)     edge               node[right] {$q,\,1,\,d'$} (ostar);
                
                \path
                (zplus) edge node[above] {\hspace*{1.5cm}$\;\;\;q,\,1,\,d$} (opp)
                (oplus) edge (zpp);
                            
                % \node[overlay, anchor=south, font=\large\bfseries]
                % at ([yshift=4mm] zplus |- diagram.north) {\quad States controlled by Player $1$};
                
                % \node[overlay, anchor=south, font=\large\bfseries]
                % at ([yshift=4mm] oplus |- diagram.north) {States controlled by Player $2$};
            \end{scope}
            
        \end{tikzpicture}
        \caption{State transition diagram of $\Gamma$. The triangle facing up are states controlled by Player $1$ while the triangle facing down are states controlled by Player $2$.}
        \label{Figure: Counterexample of HSG}
    \end{figure}
    \end{center}

    We prove that $\Gamma$ is weakly ergodic, i.e., for all $\eps>0$, there exists $m_\eps\in \NN^*$ such that, for every $b_1, b_1'\in \Delta(\K)$ and $h_{m_\eps}\in \H_{m_\eps}(b_1) \cap \H_{m_\eps}(b_1')$ ,
    \[
        \left\|b^{b_1}_{h_{m_\eps}}-b^{b_1'}_{h_{m_\eps}}\right\|_1 
            \le \eps .
    \]
    
    Consider $\eps > 0$.
    Take $m_\eps \defas 2 + \lceil \log(1/\eps) \rceil$. 
    We distinguish the following cases:

    \subparagraph{Case A}
    If the players observe $\Zeroabs$ or $\Oneabs$, then both beliefs are the same Dirac from that time and for the subsequent stages.

    \subparagraph{Case B}
    If a player chooses action $q$, then the next belief transitions deterministically as follows: 
    \begin{itemize}
        \item 
            In $\B_0$: observing $d$ leads to $1_0$, while $d'$ to $\delta_{\{\abszero\}}$;
        \item
            In $\B_1$: observing $d$ leads to $0_0$, while $d'$ to $\delta_{\{\absone\}}$.
    \end{itemize}
    Therefore, both processes evolve identically thereafter.

    \subparagraph{Case C}
    By construction, if a player chooses action $c$ and observes $d'$, then the next belief is $0_0$ in $\B_0$, and $1_0$ in $\B_1$.
    Thereafter, all subsequent beliefs are identical.

    \subparagraph{Case D}
    Assume that both players choose action $c$, observe $d$ during $m_\eps$ steps, and the block remains constant, i.e., $s_{2,m}=\Zero$ or $s_{2,m}=\One$ for every $m\in [1\until m_\eps]$. 
    Then,
    \begin{itemize}
        \item 
            In $\B_0$:
            We denote a belief on $\left\{\zeroplusplus,\zeroplus\right\}$ as $(p,1-p)$, where $p=\PP(\zeroplusplus)$. 
            Because under choosing $c$ and observing $d$, the successor belief is $\dfrac{p}{2}\delta_{\{\zeroplusplus\}}+\left(1-\dfrac{p}{2}\right)\delta_{\{\zeroplus\}}$, we get that
            \[
                \left\|b^{b_1}_{h_m}-b^{b_1'}_{h_m}\right\|_1\le \dfrac{2\left|p-p'\right |}{2^m}\le \dfrac{2}{2^m}.
            \]
            Therefore, 
            \[
                \left\|b^{b_1}_{h_{m_\eps}}-b^{b_1'}_{h_{m_\eps}}\right\|_1\le \eps.
            \]
        \item 
            In $\B_1$:
            Recall that the only posterior beliefs with support in $\B_1$ that can appear are $1_{2m}=\tfrac{1}{2^{2m}}\delta_{\{\oneplusplus\}}+\left(1-\tfrac{1}{2^{2m}}\right)\delta_{\left\{\oneplus\right\}}$ and $1_{2m+1}=\tfrac{1}{2^{2m}}\delta_{\left\{\onetop\right\}}+\left(1-\tfrac{1}{2^{2m}}\right)\delta_{\left\{\oneplus\right\}}$.
            Under Player $2$ selecting $c$ and observing $d$, the belief moves from $1_m$ to $1_{m+1}$.
            Let $\alpha(m)$ be the total mass of $1_m$ outside of $\oneplus$. Then, we have that $\alpha(2m)=\alpha(2m+1)=2^{-2m}$ for all $m\in \NN$.
            Therefore, after $m$ steps, the mass outside $\oneplus$ is at most $2^{-2\lfloor m/2 \rfloor}$.
            Take two beliefs $b_1=1_{\ell}$ and $b_1'=1_{\ell'}$ for some $\ell,\ell'\in \NN$. 
            After $m$ steps of $(c,d)$, we reach $b_m=1_{\ell+m}$ and $b_m'=1_{\ell'+m}$.
            Writing $U\defas \left\lfloor\tfrac{\ell+m_\eps}{2}\right\rfloor$ and $V\defas\left\lfloor\tfrac{\ell'+m_\eps}{2}\right\rfloor$, we obtain 
            \begin{align*}
                &\|b_{m_\eps}-b_{m_\eps}'\|_1\\
                &\qquad\le\max\left\{\left\|1_{2U}-1_{2V}\right\|_1,\left\|1_{2U+1}-1_{2V+1}\right\|_1,\left\|1_{2U}-1_{2V+1}\right\|_1\right\}\\
                &\qquad\le\max\left\{2\left|2^{-2U}-2^{-2V}\right|,2\left|2^{-2U}-2^{-2V}\right|,2\max\left\{2^{-2U},2^{-2V}\right\}\right\}\\
                &\qquad\le 2\times 2^{-2\min\{U,V\}}\\
                &\qquad\le \dfrac{4}{2^{m_\eps}}
                    &\hspace*{-2.5cm}\text{($U\ge \lfloor m_\eps/2\rfloor$ and $V\ge \lfloor m_\eps/2\rfloor$)}\\
                &\qquad \le \eps.
            \end{align*}
    \end{itemize}
    Therefore, we deduce that $\Gamma$ satisfies the weak ergodicity condition.

    Note that our formulation differs from Ziliotto's example~\cite{ziliotto2016zero} in that the public signal takes values in $\S_1\times\S_2$, rather than only $\S_1$. 
    The second component $\S_2$ serves to label the block of the next state: by construction, the signal $s_{2,m}=\zeta(k_{m})$ reveals whether $k_{m}\in \{\zeroplus,\zeroplusplus\}, \{\oneplus,\oneplusplus,\onetop\}, \{\abszero\}$ or $\{\absone\}$.
    However, in Ziliotto's example, the players already know at every stage the current block from the history of actions and signals from $\S_1$.
    Indeed, given the current block and the past actions and observed signals, the transition structure deterministically pins down the next block: after playing $c$, the game stays in the same block; after playing $q$ and observing $d$, it switches between $\Zero$ and $\One$, and after playing $q$ and observing $d'$, it moves to the corresponding absorbing block $\Zeroabs$ from $\Zero$ and $\Oneabs$ from $\One$.
    Therefore, the belief dynamics in $\Gamma$ and in Ziliotto's example remain identical.
    We conclude that the uniform value does not exist in $\Gamma$ by~\cite[Theorem 2.5]{ziliotto2016zero}.
\end{proof}

\subsection{Research Directions}

\paragraph{Decidability of verifying the Doeblin condition}
In this paper, we introduced the general subclass of Doeblin hidden stochastic games.
Verifying whether a given hidden stochastic game satisfies the Doeblin condition remains an open problem.
By Chatterjee et al.~\cite{Chatterjee2026mon}, verifying either ergodicity in the blind setting or primitivity in the hidden setting can be done in EXPSPACE.

\paragraph{Exact problem for primitive hidden stochastic games}
To establish undecidability of the exact problem in Doeblin hidden stochastic games, we draw on the fact that the exact problem is undecidable for Markov blind MDPs.
However, the class of Markov blind MDPs does not form a strict subclass of primitive hidden stochastic games. 
Therefore, the exact problem for primitive hidden stochastic games also remains an open problem.

\paragraph{Hidden stochastic games with general sets}
Our abstract stochastic game construction relies on discretizing the belief space.
This naturally suggests studying extensions of hidden stochastic games to more general state spaces, e.g., Euclidean spaces.

%%%%%%%
%%%%%%%
%%%%%%%
%%%%%%%
%%%%%%%
%%%%%%%

\section*{Acknowledgements}     

This material is based upon work supported by the ANRT under the French CIFRE Ph.D. program, in collaboration between NyxAir (France) and Paris-Dauphine University (Contract: CIFRE N° 2022/0513), by the French Agence Nationale de la Recherche (ANR) under reference ANR-21-CE40-0020 (CONVERGENCE project) and ANR-17-EURE-0010 (Investissements d’Avenir program), and partially supported by the ERC CoG 863818 (ForM-SMArt) grant and the Austrian Science Fund (FWF) 10.55776/COE12 grant. 
Part of this work was done at NyxAir (France) by David Lurie. 
Part of this work was done during a one-year visit of Bruno Ziliotto to the Center for Mathematical Modeling (CMM) at University of Chile in 2023, under the IRL program of CNRS.

%%%%%%%
%%%%%%%
%%%%%%%
%%%%%%%
%%%%%%%
%%%%%%%

\bibliographystyle{alpha}
\bibliography{Biblio_new}

\end{document}